\definecolor{PineGreen}{cmyk}{0.9,0,1,0.40}
\def\green{\color{PineGreen}}
\definecolor{ProcessBlue}{cmyk}{1,0,0,0.40}
\newtheorem*{theorem*}{Theorem}
\newtheorem{example}{Example}
\def\green{\color{PineGreen}}
\newcommand{\N}{\mathbb N}
\newcommand{\R}{\mathbb R}
\def \ep{\epsilon}
\def \FR{{\cal F}}
\def \SR{{\cal S}}
\def \TR{{\cal T}}
\newcommand{\cav}{{\rm cav\ }}
\newtheorem{Theorem}{Theorem}[section]
\newtheorem{Proposition}[Theorem]{Proposition}
\newtheorem{Lemma}[Theorem]{Lemma}
\newtheorem{Remark}[Theorem]{Remark}
\newtheorem{Remarks}[Theorem]{Remarks}
\newcounter{figurecounter}
\newcounter{figurecounter1}
\newcounter{figurecounter2}
\begin{document}

\title{Solving Two-State Markov Games with Incomplete Information on One Side%
\thanks{
The authors acknowledge the support of the COST Action 16228, the European Network of Game Theory.
Ashkenazi-Golan acknowledges the support of the Israel Science Foundation, grant \#520/16.
Solan acknowledges the support of the Israel Science Foundation, grant \#217/17.}}

\author{Galit Ashkenazi-Golan%
\thanks{The School of Mathematical Sciences, Tel Aviv
University, Tel Aviv 6997800, Israel. e-mail: galit.ashkenazi@gmail.com.},
Catherine Rainer%
\thanks{Universit\'{e} de Bretagne Occidentale, 6, avenue Victor-le-Gorgeu, B.P. 809, 29285 Brest cedex, France.
e-mail: Catherine.Rainer@univ-brest.fr.},
and Eilon Solan%
\thanks{The School of Mathematical Sciences, Tel Aviv
University, Tel Aviv 6997800, Israel. e-mail: eilons@post.tau.ac.il.}}

\maketitle

\begin{abstract}
We study the optimal use of information in Markov games with incomplete information on one side and two states.
We provide a finite-stage algorithm for calculating the limit value as the gap between stages goes to 0,
and an optimal strategy for the informed player in the limiting game in continuous time.
This limiting strategy induces an $\ep$-optimal strategy for the informed player, provided the gap between stages is small.
Our results demonstrate when the informed player should use his information and how.
\end{abstract}

\noindent
Keywords: Repeated games with incomplete information on one side, Markov games,
value, optimal strategy, algorithm.

\noindent
JEL Classification: C72, C73, C63.

\section{Introduction}

In most strategic interactions, the players are not fully informed of the game's parameters, like
their opponents' action sets and payoff functions, and sometimes even their own payoff function and the identity of the opponents.
This observation motivates the study of games with incomplete information, which was incepted in the fifties.
Harsanyi \cite{Harsanyi} introduced the model of Bayesian games, which are one-stage games with incomplete information.
Aumann and Maschler \cite{aum,AM} studied repeated games with incomplete information on one side,
provided an elegant characterization to the value, and described optimal strategies for the players.
The value function of the repeated game with incomplete information on one side turns out to be the concavification of
the value function of the one shot game, which is parameterized by the prior belief of the uninformed player over the state.
In particular, one optimal strategy of the informed player involves a single stage of revelation of information,
and thereafter the informed player does not use his information.
This characterization has been extended to continuous-time games by Cardaliaguet \cite{c1}
(see also Cardaliaguet and Rainer \cite{cr2, cr1}, Gr\"un \cite{gruen}, and Oliu-Barton \cite{oliu}),
and to repeated games with incomplete information on both sides
(see, e.g., Aumann, Maschler, and Stearns  \cite{aum3} and Mertens and Zamir \cite{MZ}).
For recent surveys on the topic, see \cite{AH,MSZ}.

In repeated games with incomplete information, the parameters of the game remain fixed throughout the interaction.
Sometimes these parameters change along the play,
in a way that is independent of the players' actions.
For example, changes in global markets affect local consumers and producers,
who in turn have negligible effect on the global market.
A model that captures this feature is that of \emph{Markov games}.

Two-player zero-sum Markov games with incomplete information on one side have been first studied by Renault \cite{Renault},
who proved the existence of the uniform value; see also Neyman \cite{Ney} and H\"orner, Rosenberg, Solan, and Vieille \cite{HRSV}.
Since the state changes over time,
the optimal strategy typically involves a repeated revelation of information.
Recently Cardaliaguet, Rainer, Rosenberg, and Vieille \cite{CRRV} studied discounted two-player zero-sum Markov games with incomplete information on one side,
where the time duration between stages goes to 0,
and characterized the limit value function and the limit optimal strategy of the informed player;
see also Gensbittel \cite{Gensbittel1, Gensbittel}.
In Cardaliaguet \cite{c3} and Gr\"un \cite{gruen3},
numerical schemes are developed for differential (resp. stochastic differential) games with asymmetric information,
by extending time discretization methods for partial differential equations.

This paper is part of a project whose goal is to study the optimal use of information in dynamic situations of incomplete information,
and to provide an easy to use algorithm for calculating the value and optimal strategies.
We study a simple class of games, namely, Markov games with incomplete information on one side and two states, denoted
$s_1$ and $s_2$.
When the current state is $s_1$ (resp.~$s_2$),
the probability that the state changes is $\pi_1$ (resp.~$\pi_2$).
Player~1 knows when the state changes, while Player~2 does not know it.
Yet players observe each other's actions and have perfect recall,
and thus Player~2 may use past actions of Player~1 to deduce the identity of the current state.

We study the limit value as the gap between stages goes to 0.
Consequently, the discount factor as well as the transition probabilities from one state to the other depend on the gap between stages.
We will provide an algorithm for calculating the limit value of this game, when the gap between stages shrinks to 0,
and the probabilities $\pi_1$ and $\pi_2$ shrink to 0 as well.
The algorithm allows us to propose an $\ep$-optimal strategy for the informed player, as long as the gap between stages is sufficiently small.
The optimal strategy of the informed player alternates between two types of behavior: phases in which no information is revealed
and phases in which information is revealed,
and the algorithm allows us to pinpoint when this change of behavior occurs.

To get some intuition to the problem we contrast it with the case of repeated games with incomplete information on one side,
in which the state does not change along the play, as studied by Aumann and Maschler \cite{AM}.
Denote by $p_n$ the belief of the uninformed player on the state at stage $n$.
The process $(p_n)_{n \in \N}$ is a martingale that is controlled by the informed player.
By the Martingale Convergence Theorem the process $(p_n)_{n \in \N}$ converges to a limit $p_\infty$,
which implies that as the game evolves information stops being revealed.
In particular under the optimal strategies of the players the stage payoff will converge to $u(p_\infty)$,
the value of the one-shot game in which the state is chosen according to the probability distribution $p_\infty$
and no player is informed of the chosen state.
It can then be proven that the value of the game is the concavification of the function $u$,
that is, the smallest concave function that is larger than or equal to $u$.
This, in turn, implies that the informed player has an optimal strategy in which information is revealed only at the first stage of the play.

In the model that we study there are two states, hence the belief of the player can be summarized by the probability that he assigns to state
$s_1$.
Since the state changes along the play,
the process $(p_n)_{n \in \N}$ is no longer a martingale;
indeed, in addition to its dependence on the informed player's actions,
the belief has a drift towards the stationary distribution of the associated Markov chain,
denoted\footnote{In fact, the complete representation of the distribution is $(p^*,1-p^*)$, meaning $p^*$ is the probability of state $s_1$.} $p^*$.
We study the discounted game, and are interested in the optimal way of information revelation.
As in the case of repeated games with incomplete information on one side,
there are two ways in which the informed player can use his information at stage $n$:
\begin{itemize}
\item[A.1]
The informed player may elect not to reveal any information at that stage.
The optimal payoff is then $u(p_n)$, and the belief changes because of the drift towards $p^*$.
\item[A.2]
The informed player may elect to split the belief between two other beliefs:
for some $p',p'' \in [0,1]$ and some $q \in [0,1]$,
the informed player plays in such a way that $p_{n+1} = p'$ with probability $q$ and $p_{n+1}=p''$ with probability $1-q$.
\end{itemize}
Since we study the limit value as the gap between stages goes to 0,
it will be more convenient to consider the process in continuous time.
One can expect that in continuous time, the interval $[0,1]$ of beliefs will be divided into subintervals, as depicted in Figure~\arabic{figurecounter}.
When the belief is in some subintervals, the informed player will reveal no information, and due to the transition,
the belief of the uninformed player will \emph{slide} towards the invariant distribution $p^*$.
When the belief is in the other subintervals, the informed player will reveal information.
In the latter case, the stage payoff if no information is revealed is low,
hence the informed player will avoid such beliefs.

This gives rise to two types of information revelations on the side of the informed player:
if the current belief is within a subinterval $I = [p',p'']$ which the informed player wants to avoid,
he will \emph{split} the belief of the uninformed player between the two endpoints of the interval, namely, $p'$ and $p''$;
if the current belief is the upper end of this subinterval and $p^* < p'$,
since the belief drifts towards the invariant distribution $p^*$,
the informed player will be able to reveal information in such a way that $p_{n+1} \in \{p',p''\}$.
This implies that the belief will remain $p''$ until it \emph{jumps} to $p'$ at a random time;
while if the  current belief is the lower end of this subinterval and $p^* < p'$,
then $p'$ is the upper end of a subinterval $I'$ in which the informed player reveals no information,
and since the belief drifts towards the invariant distribution $p^*$ it will not get into the subinterval $[p',p'']$.
If the interval lies below the invariant distribution $p^*$,
the behavior of the informed player is mirrored.

\bigskip

\centerline{\includegraphics{figure5.eps}}
\centerline{Figure~\arabic{figurecounter}: Possible information
revelation.}

\bigskip

The above description of the general structure of the optimal information revelation strategy is conjectural.
To prove that this description is correct,
and to provide an algorithm that calculates the value function,
we will write down the equations that a value function that is derived from this description must satisfy,
and use the characterization of the value function as given by Gensbittel \cite{Gensbittel} to show that this intuition is correct.
\addtocounter{figurecounter}{1}

We illustrate the optimal revelation strategies by
 three examples from the seminal work of Aumann and Maschler \cite{AM}, adapted to our model.
For expositional ease, in these examples state $s_2$ is absorbing: once the play reaches it, it remains there forever.
In particular, the invariant distribution is $p^* = 0$.

\begin{example}[Nonrevealing optimal strategy]
\label{example2}

Consider the Markov game with the payoff matrices that appear in Figure~\arabic{figurecounter}.
\setcounter{figurecounter1}{\value{figurecounter}}
\addtocounter{figurecounter1}{1}
For every $p\in[0,1]$ the value of the one-shot game is $u(p)=p(1-p)$ (see \cite{AM}, Section I.2 and the dotted line in Figure~\arabic{figurecounter1}).
The value function is concave, and the informed Player~1 has no incentive to reveal information to Player~2.
Consequently,
the limit optimal strategy consists of playing the myopic optimal strategy.
The belief, which starts at the initial belief, slides towards the invariant distribution $p^*=0$,
and the limit value function is given by the discounted integral of the function $u$ (see the dark line in Figure~\arabic{figurecounter1}).

\begin{center}
\begin{tabular}{|l|c|c|}
\hline
State $s_1$&L&R\\
\hline
T&1&0\\
\hline
B&0&0\\
\hline
\end{tabular}
\ \ \ \ \ \ \ \ \
\begin{tabular}{|l|c|c|}
\hline
State $s_2$&L&R\\
\hline
T&0&0\\
\hline
B&0&1\\
\hline
\end{tabular}
\end{center}
\centerline{Figure~\arabic{figurecounter}: The payoff matrices in Example~\ref{example2}.}
\addtocounter{figurecounter}{1}
\bigskip

\centerline{\includegraphics{figure1.eps}}
\centerline{Figure~\arabic{figurecounter}: The value function in
Example~\ref{example2}.} \addtocounter{figurecounter}{1}

\end{example}

\begin{example}{Revealing optimal strategy}
\label{example1}

Consider the game with the payoff matrices that appear in Figure~\arabic{figurecounter}.
\setcounter{figurecounter1}{\value{figurecounter}}
\addtocounter{figurecounter1}{1}
For every $p\in[0,1]$ the value of the one-shot game is $u(p)=-p(1-p)$ (see \cite{AM}, Section I.3, and the dotted line in Figure~\arabic{figurecounter1}).
The value function is convex, and the informed Player~1 has incentive to reveal his information to Player~2.
Consequently, in the optimal strategy Player~1 reveals his information at
every stage,
and the limit value function is identically 0 (see the dark line in Figure~\arabic{figurecounter1}).

\begin{center}
\begin{tabular}{|l|c|c|}
\hline
State $s_1$&L&R\\
\hline
T&-1&0\\
\hline
B&0&0\\
\hline
\end{tabular}
\ \ \ \ \ \ \ \ \
\begin{tabular}{|l|c|c|}
\hline
State $s_2$&L&R\\
\hline
T&0&0\\
\hline
B&0&-1\\
\hline
\end{tabular}
\end{center}
\centerline{Figure~\arabic{figurecounter}: The payoff matrices in Example~\ref{example1}.}
\addtocounter{figurecounter}{1}
\bigskip

 \centerline{\includegraphics{figure2.eps}}
\centerline{Figure~\arabic{figurecounter}: The value function in Example~\ref{example1}.}
\addtocounter{figurecounter}{1}
\end{example}

We now exhibit a nontrivial and challenging case, where the   function $u$ is neither convex nor concave.

\begin{example}{Partial revelation of information}
\label{example3}

Consider the Markov game with the payoff matrices that appear in Figure~\arabic{figurecounter}.
\setcounter{figurecounter1}{\value{figurecounter}}
\addtocounter{figurecounter1}{1}
For every $p\in[0,1]$ the value of the one-shot game is given by (see \cite{AM}, Section I.4 and the dotted line in Figure~\arabic{figurecounter1})
\begin{equation}
\label{uex3} u(p)=\left\{
\begin{array}{ll}
\frac{9p^2-9p+2}{6p-3}, & \mbox{ if } 0\leq p\leq \frac{1}{3},\\
0&\mbox{ if } \frac{1}{3}< p\leq \frac{2}{3},\\
\frac{9p^2-9p+2}{6p-3} & \mbox{ if } \frac{2}{3}< p\leq 1.
\end{array}\right.
\end{equation}
For $p \leq \tfrac{1}{3}$ the function $u$ is convex, hence it is optimal for Player~1 to reveal some of his information.
He should therefore pick some $p_0 \geq \tfrac{1}{3}$ and split the belief of Player~2 between $p=0$ and $p=p_0$.
Do we have $p_0=\tfrac{1}{3}$ or $p_0 > \tfrac{1}{3}$?

Consider next the case that the initial belief is $p=1$.
If Player~1 reveals no information, at every stage $k$ in which the belief is $p_k$ he obtains the payoff $u(p_k)$,
and the belief drifts towards $0$.
Consequently, his payoff slides down the graph of $u$.
Since in the interval $\tfrac{1}{3} \leq p \leq 1$ the graph of $u$ lies below the line segment that connects the points
$(\tfrac{1}{3},u(\tfrac{1}{3}))$ and $(1,u(1))$,
it is not optimal for Player~1 to hide his information throughout:
when the belief reaches some point $p_1 \in [\tfrac{1}{3},1]$ he should start revealing information.
What is this point $p_1$?
How much information does Player~1 reveal?
We will answer these questions
and provide an algorithm that
describes the limit strategy
in the general case.

\begin{center}
\begin{tabular}{|l|c|c|}
\hline
State $s_1$&L&R\\
\hline
T&1&0\\
\hline
B&0&2\\
\hline
\end{tabular}
\ \ \  \ \ \ \ \ \
\begin{tabular}{|l|c|c|}
\hline
State $s_2$&L&R\\
\hline
T&-2&0\\
\hline
B&0&-1\\
\hline
\end{tabular}
\end{center}
\centerline{Figure~\arabic{figurecounter}: The payoff matrices in Example~\ref{example3}.}
\addtocounter{figurecounter}{1}
\bigskip

\centerline{\includegraphics{figure4.eps}}
\centerline{Figure~\arabic{figurecounter}: The value function in
Example~\ref{example3}.}
\setcounter{figurecounter2}{\value{figurecounter}}
\addtocounter{figurecounter}{1}
\end{example}


The paper is organized as follows.
The model as well as known results appear in Section~\ref{sec model}.
Section \ref{sec alg} details the algorithm,
Section \ref{sec examples} demonstrates the algorithm on few examples,
and Section \ref{sec proof} proves the correctness of the algorithm.

\section{The model}\label{sec model}

In this paper we study two-player zero-sum Markov games, which were first studied in Renault \cite{Renault}.
A two-player zero-sum \emph{Markov game} $G$ is a vector $(S, A, B, g, \delta, \pi_1,\pi_2,p)$ where
\begin{itemize}
\item   $S = \{s_1,s_2\}$ is the set of states.
\item   $A$ and $B$ are finite action sets for the two players.
\item   $g : S \times A \times B \to \R$ is a payoff function.
\item   $\delta$ is the discount rate.
\item   $\pi_1$ and $\pi_2$ are the rates of transition.
\item   $p$ is the prior probability that the initial state is $s_1$.
\end{itemize}

The game is played as follows.
The initial state $s^1$ is chosen according to the probability distribution $[p(s_1),(1-p)(s_2)]$;
that is, the initial state is $s_1$ with probability $p$, and $s_2$ with probability $1-p$.
At every stage $k \in \N$ the players
choose independently and simultaneously actions
$a^k$ and $b^k$ in their action sets.
If $s^k = s_1$, then the new state $s^{k+1}$ is equal to $s_1$ with probability $1-\pi_1$ and to $s_2$ with probability $\pi_1$.
Similarly, if $s^k = s_2$, then the new state $s^{k+1}$ is equal to $s_2$ with probability $1-\pi_2$ and to $s_1$ with probability $\pi_2$.
Player~1 is the maximizer and Player~2 is the minimizer.

For every finite set $Y$, $\Delta(Y)$ denotes the set of probability distributions over $Y$.
We assume that information is asymmetric: Player~1 knows the current state while Player~2 does not.
In addition, we assume perfect recall.
Consequently,
a strategy of Player~1 is a sequence $\sigma=(\sigma_k)_{k\geq 1}$, where $\sigma_k:(S\times A\times B)^{k-1}\rightarrow\Delta(A)$ for every $k\geq 1$.
A strategy for Player~2 is a sequence $\tau=(\tau_k)_{k\geq 1}$,
where $\tau_k:(A\times B)^{k-1}\rightarrow\Delta(B)$ for every $k\geq 1$.
The sets of strategies of Player 1 and Player 2 are denoted by $\SR$ and $\TR$, respectively.
Every pair of strategies $(\sigma,\tau) \in \SR \times \TR$, together with the prior belief $p$,
induces a probability distribution on the space
$(S\times A\times B)^\N$ of \emph{plays}, and the payoff is given by
\[ g(p,\sigma,\tau):=E_{p,\sigma,\tau}\left[\sum_{k\geq 1}\delta(1-\delta)^{k-1}g(s^k,a^k,b^k)\right].\]

The value of the game $G$ is given by
\begin{equation}
\label{equ:v}
v := \max_{\sigma \in \SR} \min_{\tau \in \TR} g(p,\sigma,\tau)= \min_{\tau \in \TR} \max_{\sigma \in \SR} g(p,\sigma,\tau).
\end{equation}
The value exists because the payoff is discounted and the strategy spaces of the players are compact in the product topology.
A strategy $\sigma$ (resp.~$\tau$) of Player~1 (resp.~Player~2) that achieves the maximum (resp.~minimum)
in the second (resp.~third) term in Eq.~\eqref{equ:v} is called \emph{optimal}.

We will be interested in the value of the game and in the optimal strategy of Player~1 when the duration between stages is small.
Consequently, we will parameterize the game with a parameter $n > 0$, that will capture the duration between stages.
Thus,
given three positive real numbers $r$, $\lambda_1$, and $\lambda_2$,
we denote by $G^{(n)}(p)$ the Markov game $(S, A, B, g, 1-e^{r/n}, 1-e^{\lambda_1/n},1-e^{\lambda_2/n},p)$.
We denote by $v^{(n)}(p) = v^{(n)}(p,r,\lambda_1,\lambda_2)$ the value of the game $G^{(n)}(p)$.
It follows that
the rates of switching states are roughly $\lambda_1/n$ and $\lambda_2/n$,
and therefore the limit invariant distribution as $n$ goes to infinity is
\[ p^* := \frac{\lambda_2}{\lambda_1+\lambda_2}. \]
Denote also
\[ \mu := \frac{r}{\lambda_1+\lambda_2}. \]

By Cardaliaguet, Rainer, Rosenberg, and Vieille \cite{CRRV} the limit
$v :=\lim_{n \to \infty} v^{(n)}$ exists and the limit as $n$ goes to infinity of the optimal strategy of Player~1 can be characterized as the solution of a certain optimization problem.
We now describe this result.
Extend the domain of the payoff function $g$ to $S \times \Delta(A) \times \Delta(B)$ in a bilinear fashion:
\[ g(s,x,y)=\sum_{a\in A}\sum_{b\in B}g(s,a,b)x(a)y(b), \ \ \ \forall (x,y)\in\Delta(A)\times\Delta(B), s\in S. \]
For $p\in[0,1]$, the value of the one-shot game given that the two states $s_1$ and $s_2$ are observed by none of the players and $s_1$
is the current state with probability $p$ (and $s_2$ with probability $1-p$) is
\[ u(p):=\max_{x\in\Delta(A)}\min_{y\in\Delta(B)}\bigl(pg(s_1,x,y)+(1-p)g(s_2,x,y)\bigr).\]

Let $p \in [0,1]$ be given and let $(\Omega,\FR,P)$ be a sufficiently large probability space.
Let $\SR(p)$ be the set of all c\`adl\`ag, $[0,1]$-valued processes $(p_t)_{t\geq 0}$ defined over $(\Omega,\FR,P)$
that satisfy $E[p_0]=p$
and
$E[p_t|\FR^{p_\cdot}_s]=p_se^{-\lambda_1(t-s)}+ (1-p_s)(1-e^{-\lambda_2(t-s)})$ for every $0\leq s\leq t$, where $\FR^{p_\cdot}_t$
is the $\sigma$-algebra generated by $(p_s)_{s \leq t}$.

\begin{Theorem}[\cite{CRRV}, Theorem 1]
\label{theorem:crrv}
The sequence of functions $p\mapsto v^{(n)}(p)$ converges uniformly to a function $v : [0,1] \to \R$ that satisfies
\begin{equation}
\label{P1}
 v(p)=\max_{ (p_t)_{t \geq 0}\in\SR(p)}E\left[\int_0^\infty re^{-rt}u(p_t)dt\right], \ \ \ \forall p \in [0,1].
 \end{equation}
\end{Theorem}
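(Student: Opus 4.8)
The plan is to bracket every subsequential uniform limit of the sequence $(v^{(n)})$ between the two sides of \eqref{P1}, after first establishing enough regularity to make such limits available. The starting point is the discrete dynamic programming principle for $v^{(n)}$. Following the splitting formalism of Aumann--Maschler, a stage strategy of the informed player decomposes into a \emph{splitting} of the current belief (a martingale step: replace $p$ by a random posterior $p'$ with $E[p']=p$) followed by the deterministic belief drift $\psi_n(p):=p\,e^{-\lambda_1/n}+(1-p)(1-e^{-\lambda_2/n})$ induced by the Markov transition, while the uninformed player can always cap the stage contribution by the non-revealing value $u$. This yields a Shapley-type equation of the form
\[
v^{(n)}(p)=\sup_{\{(q_i,p_i):\,\sum_i q_i p_i=p\}}\ \sum_i q_i\Bigl[(1-e^{-r/n})\,u(p_i)+e^{-r/n}\,v^{(n)}\bigl(\psi_n(p_i)\bigr)\Bigr],
\]
the supremum ranging over finite splittings of $p$.

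Next I would prove uniform regularity. Since $u$ is the value of a fixed finite matrix game it is Lipschitz; the drift map $\psi_n$ is affine with slope $e^{-\lambda_1/n}+e^{-\lambda_2/n}-1\in[0,1]$, hence nonexpansive; and the splitting operator preserves Lipschitz constants. Feeding these facts into the recursion shows that $\{v^{(n)}\}$ is uniformly bounded and equi-Lipschitz on $[0,1]$. By Arzel\`a--Ascoli every subsequence has a further subsequence converging uniformly to some $w:[0,1]\to\R$, so it suffices to identify every such $w$ with the right-hand side of \eqref{P1}; uniform convergence of the whole sequence then follows automatically.

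For the lower bound, $\liminf_n v^{(n)}(p)\ge \sup_{(p_t)\in\SR(p)}E\bigl[\int_0^\infty re^{-rt}u(p_t)\,dt\bigr]$, I would fix an admissible c\`adl\`ag process $(p_t)\in\SR(p)$ and discretize it on the grid $\{k/n\}$, realizing stage by stage the required martingale increments by splittings in $G^{(n)}$. The conditional-expectation identity defining $\SR(p)$ is exactly the one matched by $\psi_n$ over a time step of length $1/n$, so the discretized play is feasible for the informed player; a Riemann-sum estimate together with the Lipschitz continuity of $u$ then gives that its discounted payoff converges to $E[\int_0^\infty re^{-rt}u(p_t)\,dt]$, and taking the supremum over processes yields the bound.

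For the upper bound, $\limsup_n v^{(n)}(p)\le \sup_{(p_t)\in\SR(p)}E[\int_0^\infty re^{-rt}u(p_t)\,dt]$, I would take a near-optimal splitting strategy attaining the recursion, consider the induced belief process $(p^{(n)}_t)$ held piecewise constant between grid points, use the uniform bounds to obtain tightness in the Skorokhod space $D([0,\infty),[0,1])$, extract a weak limit $(p_t)$, verify that it inherits the identity $E[p_t\mid\FR^{p_\cdot}_s]=p_se^{-\lambda_1(t-s)}+(1-p_s)(1-e^{-\lambda_2(t-s)})$ and hence lies in $\SR(p)$, and pass to the limit in the discounted functional, using the Aumann--Maschler defense (against any belief trajectory the uninformed player can hold the per-stage payoff down to $u$). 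I expect this upper bound to be the crux: the delicate points are controlling the Skorokhod limit so that no mass is lost through oscillations or jumps accumulating at scale $1/n$, and, above all, checking that the weak limit satisfies the \emph{exact} conditional-expectation constraint rather than merely an inequality, since only then is the limit admissible and the bound tight; equivalently, one must show that the uninformed player's defense is asymptotically optimal uniformly along the trajectory.
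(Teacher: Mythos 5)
This statement is quoted from Cardaliaguet, Rainer, Rosenberg, and Vieille \cite{CRRV} and is not proved in the paper, so there is no in-paper proof to compare against; for what it is worth, the cited source establishes it not by the probabilistic compactness route you sketch but by showing that the upper and lower limits of $v^{(n)}$ are viscosity sub- and super-solutions of the variational inequality (the ``P2'' characterization also quoted in the paper) and invoking a comparison principle, with the representation \eqref{P1} then derived from that PDE characterization.

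As a standalone argument your outline has one genuine gap that propagates through both bounds: the ``Shapley-type equation'' you write, in which a stage contributes exactly $(1-e^{-r/n})u(p_i)$ after a splitting, is not an identity. The informed player's stage action simultaneously generates the payoff and leaks the information, so the uninformed player cannot cap the realized stage payoff by $u$ evaluated at his current belief; the correct statement (the heart of the Aumann--Maschler machinery) is that the stage payoff exceeds $u(p_k)$ by at most a constant times the expected belief increment $E\bigl[|p_{k+1}-p_k|\bigr]$ revealed at that stage, and one must then show that the discounted sum of these error terms vanishes as $n\to\infty$ using the bounded total variation of the (drift-corrected) belief martingale. Without this estimate the upper bound collapses, and it is precisely this per-stage defense estimate --- rather than only the Skorokhod-tightness and exact-martingale-identity issues you flag --- that makes the $\limsup$ direction work. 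Your lower bound is essentially fine (discretizing an admissible process and playing the corresponding splittings is legitimate, again up to the same $O(1/n)$-per-stage payoff correction), and the equi-Lipschitz/Arzel\`a--Ascoli reduction is standard; but as written the proposal assumes the decoupling of payoff and revelation that the theorem's difficulty resides in.
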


The processes $(p_t)_{t \geq 0}\in\SR(t)$ in Eq.~\eqref{P1} represent the possible revelation mechanisms
induced by the actions of the informed player.
In particular, the process that realizes the maximum in Eq.~\eqref{P1} represents the optimal revelation process
for the continuous-time game.
The characterization of $v$ provided by Cardaliaguet, Rainer, Rosenberg, and Vieille~\cite{CRRV} is via a differential equation,
as summarized by the next result.

\begin{Theorem}[\cite{CRRV} Theorem 1. P2 ]
\label{theorem:crrv}
The limit value function $v$ is the unique viscosity solution of the equation
\[ \min\{ rv(p)-\langle ^t\! Rp,Dv(p)\rangle -ru(p); -\lambda_{max}v(p,D^2v(p))\}=0, \; \forall p\in\Delta(2),\]
where $R=\begin{pmatrix}
-\lambda_1&\lambda_1\\
\lambda_2&-\lambda_2
\end{pmatrix}$ is the generator of the Markov chain and $\lambda_{max}v(p,D^2v(p))$
is the maximal eigenvalue of the restriction of $D^2v(p)$ to the tangent space at $p$ to $\Delta(2)$.
\end{Theorem}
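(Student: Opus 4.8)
The plan is to prove the two assertions of the statement in turn: first, that the value function $v$ defined by the variational problem \eqref{P1} \emph{is} a viscosity solution of the displayed equation, and second, that this equation has at most one bounded continuous solution, so that $v$ is \emph{the} solution. Write $F(p,\phi):=\min\{r\phi(p)-\langle {}^{t}\!Rp,D\phi(p)\rangle-ru(p);\,-\lambda_{\max}(p,D^2\phi(p))\}$ for the operator, where $\lambda_{\max}(p,\cdot)$ is the largest eigenvalue of the Hessian restricted to the tangent space of $\Delta(2)$ at $p$. Continuity and boundedness of $v$, which are needed to make sense of the viscosity framework, I would take from the uniform convergence asserted in Theorem~\ref{theorem:crrv} (statement P1). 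The key preliminary step is a dynamic programming principle for \eqref{P1}: since an admissible process in $\SR(p)$ may be concatenated at a stopping time with a fresh admissible process restarted from its current position, one gets, for every $h>0$,
\[ v(p)=\sup_{(p_t)\in\SR(p)}E\Bigl[\int_0^h re^{-rt}u(p_t)\,dt+e^{-rh}v(p_h)\Bigr]. \]

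Granting this identity, I would extract the supersolution inequality $F(p,\phi)\ge 0$ (for $\phi$ smooth touching $v$ from below at $p$) by testing against two explicit families of controls. The pure-drift control, under which $p_t$ solves the deterministic dynamics $\dot p_t={}^{t}\!Rp_t$ with no jumps, plugged into the inequality $v(p)\ge\int_0^h re^{-rt}u(p_t)\,dt+e^{-rh}v(p_h)$ and expanded to first order in $h$, yields $r\phi(p)-\langle {}^{t}\!Rp,D\phi(p)\rangle-ru(p)\ge 0$. An instantaneous split of $p$ into $p',p''$ along a tangent direction with barycenter $p$ gives $v(p)\ge q\,v(p')+(1-q)v(p'')$, so that $v$ is concave along $\Delta(2)$ and hence $-\lambda_{\max}(p,D^2\phi(p))\ge 0$. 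Together these two inequalities give $F(p,\phi)\ge 0$.

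For the subsolution inequality $F(p,\phi)\le 0$, taken at a local maximum of $v-\phi$, I would argue by contradiction: if both terms of the minimum were strictly positive at $p$, then $\phi$ would be strictly concave there, so splitting is strictly unprofitable in a neighbourhood and the near-optimal behaviour must keep the belief on the drift with no jump over a short horizon; but the first term being strictly positive then contradicts the optimality encoded in the dynamic programming principle, as the player would forgo a definite amount over $[0,h]$. Hence at least one term is nonpositive and $F(p,\phi)\le 0$, so $v$ is a viscosity solution.

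The main obstacle is uniqueness, which I would derive from a comparison principle asserting that every bounded subsolution lies below every bounded supersolution. Two features make this delicate. First, the second-order term enters only through the maximal eigenvalue of the Hessian \emph{restricted to the tangent space} of the simplex, so the equation is strongly degenerate and of obstacle (variational-inequality) type; the usual doubling-of-variables argument must be performed intrinsically on $\Delta(2)$ and must accommodate the fact that the constraint $-\lambda_{\max}\ge 0$ controls curvature only in admissible directions. Second, the sole first-order ingredient is the transport term $\langle {}^{t}\!Rp,D\phi(p)\rangle$, which degenerates at the endpoints of $\Delta(2)$, so one must verify that the drift ${}^{t}\!Rp$ points inward (or is tangent) there, whence no boundary data need be imposed; this is precisely where the invariant distribution $p^*$ and the reduction to the scalar coordinate $p$ are convenient. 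Once the comparison principle is established, combining it with the fact that $v$ is simultaneously a sub- and a supersolution forces $v$ to agree with any other solution, which completes the argument.
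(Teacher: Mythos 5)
First, a point of order: the paper does not prove this statement at all. It is imported verbatim from Cardaliaguet, Rainer, Rosenberg, and Vieille \cite{CRRV} (their Theorem 1, property P2), and the authors explicitly decline even to define viscosity solutions, since the sequel relies only on the reformulation in Theorem~\ref{Char}. So there is no in-paper proof to compare yours against; what you have written is a reconstruction of the argument of \cite{CRRV}.

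As a reconstruction it follows the expected route, but it is a proof plan rather than a proof, and the load-bearing steps are exactly the ones left as declarations of intent. (i) The dynamic programming principle for \eqref{P1} requires showing that $\SR(p)$ is stable under concatenation at stopping times and under measurable selection of near-optimal continuations; this is plausible but must be checked, since admissibility is a conditional-expectation identity that has to survive the splicing time. (ii) In the subsolution step, the claim that ``splitting is strictly unprofitable, so the near-optimal process must follow the drift'' is precisely the hard quantitative estimate: one must bound the martingale part of $(p_t)$ on $[0,h]$ by the strict-concavity gap of the test function (e.g.\ control $E\bigl[(p_h-E[p_h])^2\bigr]$) before the strict positivity of the first term can produce a contradiction of order $h$. (iii) The comparison principle, which you yourself flag as the main obstacle, is asserted rather than proved; for this degenerate obstacle-type equation it constitutes the bulk of the work in \cite{CRRV}. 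I would also note a structural difference: \cite{CRRV} do not derive the PDE from the continuous-time problem \eqref{P1}, but rather pass to the limit in the discrete dynamic programming equation of the games $G^{(n)}(p)$, obtaining P1 and P2 together; your route presupposes P1, which is itself part of what is being proved there. None of this makes your outline wrong, but as written it establishes neither the existence nor the uniqueness half of the statement.
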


Since in the sequel we will not need any notion of viscosity, we do not provide their definition, and
refer to \cite{CRRV} for the definition used in the above theorem.
In \cite{CRRV} it is also shown how the optimal solution $(p_t)_{t \geq 0}$ in Eq.~\eqref{P1} can be used to identify $\ep$-optimal strategies for the informed player in the discrete-time game $G^{(n)}(p)$,
provided $n$ is sufficiently large.


In \cite{Gensbittel}, Gensbittel reformulates Theorem \ref{theorem:crrv} in terms of directional derivatives.
Using the fact that in the two-state case the resulting equations are one-dimensional,
we can prove that the limit value function $v$ is differentiable on $[0,1]\setminus\{ p^*\}$.
This leads to the following simple characterisation of $v$ that involves only an ordinary differential equation.

Recall that the \emph{hypograph} of a function
$f : [0,1] \to \R$ is the set of all points that are on or below the graph of the function.
When $f$ is concave, its hypograph is a convex set, and its set of extreme points coincides with the set of points on the graph of $f$ where $f$ is not affine,
plus the corner points $(0,v(0))$ and $(1,v(1))$.

\begin{Theorem}
\label{Char}
The function $v$ is the unique continuous, concave function $v : [0,1]\to\R$ which is differentiable on $[0,1]$ except, possibly,
at $p^*$, and that satisfies the following conditions:
\begin{itemize}
\item[G.1] $v(p^*)\geq u(p^*)$, with an equality if $(p,v(p))$ is an extreme point of the hypograph of $v$.
\item[G.2] For every $p\in[0,1]\setminus\{ p^*\}$ we have $v'(p)(p-p^*) + \mu\left(v(p)-u(p)\right)\geq 0$.
\item[G.3] For every extreme point $(p,v(p))$ of the hypograph of $v$ such that $p \neq p^*$ we have
\begin{equation}
\label{eqv}
v'(p)(p-p^*) + \mu\left(v(p)-u(p)\right)= 0,
\end{equation}
\end{itemize}
where for $p=0$ (resp.~$p=1$), $v'(p)$ stands for the right (resp.~left) derivative of $v$ at $p$.
\end{Theorem}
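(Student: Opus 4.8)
The plan is to transfer the viscosity/directional-derivative characterisation of Theorem \ref{theorem:crrv} (in Gensbittel's form \cite{Gensbittel}) to the one-dimensional coordinate $p$, and to show that within the class of continuous concave functions on $[0,1]$ that are differentiable off $p^*$, conditions G.1--G.3 are equivalent to solving that equation; since the solution is unique and equals $v$, this delivers the theorem. Two structural facts come first. Concavity of $v$ I would read off the representation \eqref{P1}: if $p=\alpha p'+(1-\alpha)p''$ and we take (near-)optimal processes in $\SR(p')$ and $\SR(p'')$, the process that randomises at time $0$ with weights $\alpha,1-\alpha$ and then follows the chosen one belongs to $\SR(p)$ and attains $\alpha v(p')+(1-\alpha)v(p'')$, so $v(p)\ge\alpha v(p')+(1-\alpha)v(p'')$. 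Differentiability of $v$ on $[0,1]\setminus\{p^*\}$ is the one-dimensional consequence of Gensbittel's reformulation noted before the statement, which I would invoke directly.

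Next I would rewrite the operator of Theorem \ref{theorem:crrv} in the coordinate $p$. Since ${}^tRp=\bigl((\lambda_1+\lambda_2)(p^*-p),-(\lambda_1+\lambda_2)(p^*-p)\bigr)$ and the tangential derivative along $\Delta(2)$ is $v'(p)$, we get $\langle {}^tRp,Dv(p)\rangle=(\lambda_1+\lambda_2)(p^*-p)\,v'(p)$. Dividing the first argument of the minimum by $\lambda_1+\lambda_2>0$ and using $\mu=r/(\lambda_1+\lambda_2)$ turns the supersolution inequality ``first term $\ge 0$'' into precisely G.2, while ``second term $\ge 0$'', namely $-\lambda_{max}(p,D^2v(p))\ge 0$, is exactly concavity. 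Thus the inequality content of G.1--G.3 (concavity and G.2) is just the supersolution property of $v$ at points of differentiability.

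For the equalities I would argue from the primal meaning of \eqref{P1}. At a belief $p$ for which $(p,v(p))$ is an extreme point of the hypograph the optimal process performs no splitting---splitting would place $p$ in the interior of a chord and destroy extremality---so near $p$ the belief only drifts, $\dot p=(\lambda_1+\lambda_2)(p^*-p)$, and the value satisfies the no-revelation relation $rv(p)=ru(p)+(\lambda_1+\lambda_2)(p^*-p)v'(p)$; dividing by $\lambda_1+\lambda_2$ gives the ODE \eqref{eqv} of G.3. Condition G.1 is the analogue at the degenerate point $p^*$, where the drift vanishes: not revealing holds the belief at $p^*$ forever and pays $u(p^*)$, so $v(p^*)\ge u(p^*)$, with equality exactly when no revelation is optimal, i.e.\ when $(p^*,v(p^*))$ is an extreme point of the hypograph.

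For uniqueness I would take any continuous concave $w$, differentiable off $p^*$, satisfying G.1--G.3, and run the correspondence in reverse to show $w$ is a viscosity solution of the equation in Theorem \ref{theorem:crrv}; its uniqueness then forces $w=v$. On each open interval where $w$ is affine the second-order term vanishes and G.2 gives the first term $\ge 0$, so the minimum is $0$; on the closed set of extreme points G.3 makes the first term vanish while concavity keeps the second $\le 0$, again giving $0$. The main obstacle, as I see it, is to make these last two statements precise as viscosity sub- and supersolution inequalities at the two places where classical derivatives may fail: the single admissible kink at $p^*$, where the coefficient $(p-p^*)$ degenerates, and the faithful identification in the test-function sense of ``$(p,w(p))$ extreme in the hypograph'' with ``the second-order constraint is inactive so the equation is carried by the first-order term''. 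Both are to be resolved using concavity---which controls the one-sided derivatives at $p^*$ and forces touching test functions to be essentially one-sided---together with G.1 at $p^*$. A self-contained alternative avoiding viscosity would be a direct comparison argument: assuming $w\neq v$, localise an interior extremum of $w-v$ and derive a contradiction from G.2--G.3, where once more the degeneracy of the drift at $p^*$ is the only real difficulty.
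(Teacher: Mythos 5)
Your overall architecture --- translating the Gensbittel/CRRV characterisation into the one-dimensional coordinate $p$ and matching G.1--G.3 against the resulting sub/supersolution inequalities, then using uniqueness in that characterisation --- is the same as the paper's, which works from Gensbittel's Theorem 2.12: $v$ is the unique concave Lipschitz function with $r(v(p)-u(p))-\overrightarrow{D}V(\pi,{}^t\!R\pi)\geq 0$ everywhere and the reversed inequality at extreme points of the hypograph. But there is a genuine gap at the step you dispose of in one sentence: you write that differentiability of $v$ on $[0,1]\setminus\{p^*\}$ ``is the one-dimensional consequence of Gensbittel's reformulation \dots\ which I would invoke directly.'' It is not available to be invoked; Gensbittel's result gives only concavity and Lipschitz continuity, so a priori $v$ has one-sided derivatives with $v'_+(p)\leq v'_-(p)$ and nothing more. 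Establishing $v'_-(p)=v'_+(p)$ for $p\neq p^*$ is precisely the content of the paper's proof: for $p>p^*$ one combines concavity (which gives $v'_-(q)\leq v'_+(p)\leq v'_-(p)$ for $q>p$) with the supersolution inequality at nearby $q$, lets $q\downarrow p$ to obtain
$0\leq \mu(v(p)-u(p))+(p-p^*)v'_+(p)\leq \mu(v(p)-u(p))+(p-p^*)v'_-(p)$,
and then uses the reversed inequality at extreme points (forcing the right-hand term to vanish) to squeeze $v'_+(p)=v'_-(p)$ there; at non-extreme points $v$ is locally affine by concavity, hence differentiable. Without this argument the identity $\overrightarrow{D}V(\pi,{}^t\!R\pi)=-v'(p)\tfrac{r}{\mu}(p-p^*)$ on which your whole translation rests is not well defined, and the class of functions in which uniqueness is asserted is not reached.

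A second, smaller weakness: your derivation of the equality in G.3 ``from the primal meaning of \eqref{P1}'' (no splitting at an extreme point, hence the belief only drifts and the no-revelation ODE holds) is a heuristic about the optimal process, not a proof; making it rigorous would require a dynamic programming principle for \eqref{P1} and a verification argument, neither of which you supply. The paper sidesteps this entirely by taking the reversed directional-derivative inequality at extreme points as part of the quoted characterisation. Your uniqueness direction --- any continuous concave $w$, differentiable off $p^*$ and satisfying G.1--G.3, solves the characterising equation and hence equals $v$ --- is sound and matches the paper's logic, since for such $w$ the relevant directional derivatives are classical; the viscosity technicalities you flag at $p^*$ do not arise in the paper because it stays within the directional-derivative formulation throughout.
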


\begin{proof}
By Theorem 2.12 in Gensbittel~\cite{Gensbittel}, the limit value function is the unique concave, Lipschitz function that satisfies
\[
r(v(p)-u(p))-\overrightarrow{D}V(\pi,^t\!\! R\pi)\geq 0, \ \ \ \forall p\in[0,1], \pi=(p,1-p),\]
and, if $\pi = (p,v(p))$ is an extreme point of the hypograph of $v$,
\begin{equation}
\label{equ:21}
r(v(p)-u(p))-\overrightarrow{D}V(\pi,^t\!\! R\pi)\leq 0,
\end{equation}
where $\overrightarrow{D}V(\pi,\cdot)$ is the directional derivative of $V:\Delta(2)\ni (p,1-p)\mapsto V(p,1-p):=v(p)$, and $R=\begin{pmatrix}
-\lambda_1&\lambda_1\\
\lambda_2&-\lambda_2
\end{pmatrix}$.
It follows that
\begin{equation}
\label{equ:22}
\overrightarrow{D}V(\pi,^t\!\! R\pi)=\left\{\begin{array}{ll}
-v'_+(p)\frac{r}{\mu}(p-p^*)&\mbox{ if }p<p^*,\\
-v'_-(p)\frac{r}{\mu}(p-p^*)&\mbox{ if }p>p^*,\\
\overrightarrow{D}V(\pi,0)=0& \mbox{ for } \pi=(p^*,(1-p^*)).
\end{array}
\right.
\end{equation}
The theorem will follow once we show that $v'_-(p)=v'_+(p)$ for every $p\in(0,1)\setminus\{p^*\}$.\\
Suppose first that $p>p^*$. This implies that for every $q>p$ it holds that $q>p^*$, hence $\frac{r}{\mu}(q-p^*)>0$. Since $v$ is concave, it also implies that $v'_-(q)\leq v'_+(p)\leq v'_-(p)$.
From Eqs.~\eqref{equ:21} and~\eqref{equ:22} it follows that
\begin{eqnarray*}
0\leq \mu(v(q)-u(q))+v'_-(q)(q-p^*)
\leq \mu(v(q)-u(q))+v'_+(p)(q-p^*).
\end{eqnarray*}
From the continuity of $u$ and $v$
we deduce that
\[ 0 \leq \mu(v(p)-u(p))+(p-p^*)v'_+(p)\leq \mu(v(p)-u(p))+(p-p^*)v'_-(p).\]
If $\pi=(p,v(p))$ is an extreme point of the hypograph of $v$, then
\[  \mu(v(p)-u(p))+(p-p^*)v'_-(p)=0,\]
and it follows that $v'_-(p)=v'_+(p)$.
If $p$ is not an extreme point of the hypograph of $v$, then there exist $p_1,p_2\in[0,1]$ and $\alpha\in(0,1)$ such that
$p=\alpha p_1+(1-\alpha )p_2$ and $v(p)=\alpha v(p_1)+(1-\alpha )v(p_2)$.
Since $v$ is concave, it follows that $v$ is affine on the interval $[p_1,p_2]$, and therefore differentiable on its interior.
In particular, $v'_-(p)=v'_+(p)$ in this case as well.

Suppose now that $p<p^*$. In this case for $q<p$ we have $\frac{r}{\mu}(q-p^*)<0$ and $v'_+(p)\leq v'_-(p)\leq v'_+(q)$ for every $q<p$,
and an analogous argument to the one provided above leads to the same result: $v'_-(p)=v'_+(p)$.
\end{proof}

\begin{Remark}
The arguments of the proof cannot be used for $p=p^*$, because $^t\! Rp^*=0$.
In fact, the function $v$ may not be differentiable at $p^*$, see variation b of Example 3 below.
\end{Remark}

\section{An algorithm to calculate the value function and the optimal revelation process}\label{sec alg}

In this section we present a finite stage recursive algorithm for calculating the limit value function
and the limit optimal strategy for the informed player.
We start by explaining the intuition behind the algorithm.

\subsection{Intuition}

 We shall see that the limit value at the invariant distribution $p^*$ can be explicitly calculated.
The algorithm will assume that the limit value was already calculated in a certain closed interval that contains $p^*$,
and will calculate it for a larger interval.
The calculation for beliefs smaller than $p^*$ will be analogous to the calculation for beliefs larger than $p^*$,
hence we will concentrate on the latter.

In this section we provide the equations that the limit value function must satisfy under the three types
of information revelation that were discussed in the introduction.


\subsubsection{No revelation of information}
\label{strategy:1}

We first provide the equation that the limit value function satisfies in an interval in which no information is revealed by the informed player.
Fixing the time step $1/n$ and the initial distribution $p\in[0,1]$,
 let  $v^{(n)}(p)$ be the value of the corresponding game.
Let $p^*<p'<p''\leq 1$ or $0\leq p'<p''<p^*$, and suppose that, given any belief $p\in[p',p'']$ of the uninformed player, the optimal strategy of the informed player is not to reveal his information.
The continuation payoff is given by
$v^{(n)}(pe^{-\lambda_1/n}+(1-p)(1-e^{-\lambda_2/n}))$.
Therefore the value function satisfies the relation
\begin{equation}\label{eq varphi}
v^{(n)}(p)=(1-e^{-r/n})u(p)+e^{-r/n}v^{(n)}(pe^{-\lambda_1/n}+(1-p)(1-e^{-\lambda_2/n})).
\end{equation}
Simple algebraic manipulations yield that
\begin{eqnarray*}
    &&\frac{v^{(n)}(p)-v^{(n)}\left(pe^{-\lambda_1/n}+(1-p)(1-e^{-\lambda_2/n})\right)}{p-\left(pe^{-\lambda_1/n}+(1-p)(1-e^{-\lambda_2/n})\right)}\\
    &&=
    \frac{(1-e^{-r/n})u(p)
    - (1-e^{-r/n})v^{(n)}\left(pe^{-\lambda_1/n}+(1-p)(1-e^{-\lambda_2/n})\right)}{p-\left(pe^{-\lambda_1/n}+(1-p)(1-e^{-\lambda_2/n})\right)}.
\end{eqnarray*}
Taking the limit as $n$ goes to $\infty$ (recall that $\mu=\frac{r}{\lambda_1+\lambda_2}$ and $p^*=\frac{\lambda_2}{\lambda_1+\lambda_2}$) we obtain that
$v=\lim_nv^{(n)}$ is the solution of the following differential equation:
\begin{eqnarray}
\label{equ:noinformation}
    v'(p) &=& \frac{\mu(u(p)-v(p))}{p-p^*}.
\end{eqnarray}

\subsubsection{Jumping from $p'$ to $p''$ }
\label{strategy:2}

Suppose now the informed player wants to avoid beliefs in some open interval $(p',p'')$, with $p^*<p'$,
and that moreover in some interval $(p'',p''')$ the informed player revealed no information.
As explained in the introduction,
when the belief is $p''$, the informed player will reveal the amount of information that ensures that the belief at the next stage is either $p'$ or $p''$.
We will also assume that if $p \in (p',p'')$, then when the belief is $p$ the informed player splits the belief between $p'$ and $p''$.
It follows that for every $n \in \N$
the value function $v^{(n)}$ is  affine on the interval $[p',p'']$,
and therefore so is the limit value function $v$, that is,
\begin{equation}
\label{v'p2a}
v'_-(p'')=\frac{v(p'')-v(p')}{p''-p'}.
\end{equation}
Moreover, since $p''$ is an endpoint of an interval in which no information is revealed, by Eq.~\eqref{equ:noinformation}
\begin{equation}
\label{v'p2}
v'_+(p'')=\frac{\mu(u(p'')-v(p''))}{p''-p^*}.
\end{equation}
Since $v$ is smooth at $p''$, we have $v'_+(p'')=v'_-(p'')$, and therefore by Eqs.~\eqref{v'p2a} and~\eqref{v'p2} we have
\begin{equation*}
v(p'')\left(\frac{1}{p''-p'}+\frac{\mu}{p''-p^*}\right) =\frac{v(p')}{p''-p'} +\frac{\mu u(p'')}{p''-p^*},
\end{equation*}
or, equivalently,
\begin{equation}
\label{equ:21bb}
v(p'') =\frac{v(p')(p''-p^*)+\mu(p''-p')u(p'')}{p''-p^*+\mu(p''-p')}.
\end{equation}
Substituting $v(p'')$ from Eq.~\eqref{equ:21bb} in Eq.~\eqref{v'p2} we obtain that
\begin{equation}
\label{equ:22b}
v'(p'') = \frac{\mu(u(p'')-v(p'))}{ p''-p^*+\mu(p''-p')}.
\end{equation}

\noindent From the affinity of $v$ we deduce that for every $p\in[p',p'']$ we have

\begin{equation}
\label{eq:22c}
v(p)=v(p')+(p-p')\frac{\mu(u(p'')-v(p'))}{ p''-p^*+\mu(p''-p')}.
\end{equation}

\subsubsection{Splitting the belief}

A third possible strategy for the informed player is to split the belief from $p$ to $p'$ and $p''$, where $p' < p < p''$.
In this case, whenever the belief is in the open interval $(p',p'')$ it will be optimal
for the informed player to reveal information in such a way that the belief is either $p'$ or $p''$.
In continuous time this implies that the belief will never be in the open interval $(p',p'')$.

If both $p'$ and $p''$ are smaller than $p^*$, or both are larger than $p^*$,
then, this kind of information revelation will possibly occur only once, at the first stage of the game.

If $p'<p^*<p''$ then this case reduces to the one described in Section~\ref{strategy:2}:
when the belief is $p''$ (resp.~$p'$), it remains at $p''$(resp.~$p'$) until it jumps at a random time to $p'$ (resp.~$p''$).
Applying Eq.~\eqref{equ:21bb} to the jumps from $p'$ to $p''$ and from $p''$ to $p'$,
we obtain two affine equations in $v(p')$ and $v(p'')$.
If for every $p \in (p',p'')$ the informed player splits the belief to $p'$ and $p''$,
we obtain a strategy for the informed player that guarantees a payoff of
\begin{equation}
\label{eq: middle split2}
v(p)=u(p')\frac{(\mu+1)p''-p^*}{(p''-p')(\mu+1)}+u(p'')\frac{p^*-p'(\mu+1)}{(p''-p')(\mu+1)} + p\mu\cdot\frac{u(p'')-u(p')}{(p''-p')(\mu+1)}, \ \ \ p \in [p',p''].
\end{equation}

\begin{Remark}
\label{remark stationary}
Substituting $p=p^*$ in Eq.~\eqref{eq: middle split2}
we obtain:

    \begin{equation}
    v(p^*)= \frac{p''-p^*}{p''-p^*}u(p')+\frac{p^*-p'}{p''-p'}u(p'').
    \end{equation}

\end{Remark}
\subsubsection{Conclusion}

The intuition we presented describes the conjectured behavior of the belief of Player~2
under the optimal strategy of Player~1:
in the first stage the belief may split,
and thereafter the behavior alternates between sliding continuously towards the invariant distribution $p^*$
and jumping at a random time to a belief closer to $p^*$.

To find the points where the behavior of the belief changes,
we will begin from ``the end'', that is, from $p=p^*$, and work our way towards $p=1$
(and then towards $p=0$).
Supposing that the limit value function was already calculated for every belief $p$ in some interval $[p^*,p_0]$,
we compare the incremental value of the two strategies described in Sections~\ref{strategy:1} and~\ref{strategy:2},
find the maximal interval $[p_0,p_1]$ for which
the better
strategy yields a higher increment,
and accordingly extend the definition of the limit value function to the interval $[p^*,p_1]$.
We then
conduct the analogous procedure for $p$'s smaller than $p^*$.

\subsection{The algorithm to compute the limit value function}
\label{section:algorithm}

In this section we present the algorithm that calculates the limit value function.
We will start with some notations.
Given a continuous real valued function $f$ defined on some interval $I\subset[0,1)$,
we define the function $a(\cdot,f) : I \to \R\cup\{+\infty\}$ by

\begin{equation}
\label{equ:91}
a(p,f)
:=\sup_{p' \in (p,1]}\frac{\mu(u(p')-f(p))}{ p'-p^*+\mu(p'-p)}.
\end{equation}

Analogously, if $f$ is defined on some interval $I\subset(0,1]$, we define
 the function $\widetilde{a}(\cdot,f) : I \to \R\cup\{-\infty\}$ by
\[ \widetilde{a}(p,f)
:=\inf_{p'\in [0,p)}\frac{\mu(u(p')-f(p))}{ p'-p^*+\mu(p'-p)}.\]

Note that for $p\neq p^*$ we have
\[ a(p,f)
=\max_{p' \in [p,1]}\frac{\mu(u(p')-f(p))}{ p'-p^*+\mu(p'-p)} \;\mbox{ and }\;\widetilde{a}(p,f)
:=\min_{p'\in [0,p]}\frac{\mu(u(p')-f(p))}{ p'-p^*+\mu(p'-p)}. \]
The function $a(\cdot,f)$  (resp. $\widetilde{a}(\cdot,f)$) is continuous on $I\setminus \{p^*\}$ ,
and if $f(p^*)=u(p^*)$ and $I=[p^*,\hat p]$ (resp. $I=[\hat p, p^*]$) for some $\hat p$, then $f$ is continuous on $I$.

Define also
\begin{equation}
\label{equ:91b}
\left\{\begin{array}{ll}
\rho(p,f):=
\sup\left\{p'\in(p,1] \colon a(p,f)=\frac{\mu(u(p')-f(p))}{ p'-p^*+\mu(p'-p)}\right\}&\mbox{ if }p>p^*,\\
\widetilde\rho(p,f) :=
\inf\left\{p'\in[0,p) \colon \widetilde a(p,f)=\frac{\mu(u(p')-f(p))}{ p'-p^*+\mu(p'-p)}\right\} &\mbox{ if }p<p^*,
\end{array}\right.
\end{equation}
with $\inf\emptyset=1$ and $\sup\emptyset=0$.

To see the motivation  for these definitions, recall the discussion in Section~\ref{strategy:2}.
When Player~1 wants to make the belief of Player~2 jump from some $p' > p$ to $p$,
the value function on the interval $[p,p']$ is affine and given by Eq.~\eqref{equ:21bb}.
In particular, the slope of the value to the left of $p'$ is given by $\frac{\mu(u(p')-v(p))}{ p'-p^*+\mu(p'-p)}$, see Eq.~\eqref{equ:22b}.
To maximize the payoff in a small neighborhood to the right of $p$,
Player~1 will jump to $p$ from some $p'$ that attains the maximum in Eq.~\eqref{equ:91}.
The quantity $a(p,v)$ is defined to be the slope at such optimal belief,
and $\rho(p,v)$ is the largest optimal belief.
The quantities $\widetilde a(p,v)$ and $\widetilde \rho(p,v)$ have analogous interpretations when $p < p^*$.

\medskip

We now present the algorithm, which defines in steps a function $w : [0,1] \to \R$
that is later shown to be the limit value function.
The initial step of the algorithm identifies a closed interval $[\widetilde{p}_0,p_0]$
that includes the stationary distribution $p^*$, on which the calculation of the limit value function is simple.

The algorithm then defines
iteratively an increasing sequence $(p_k)_{k \geq 0}$ of points in the interval $[p_0,1]$;
at the $k$'th
iteration of the algorithm we define the point ${p}_k$ and extend the definition of $w$ to include $(p_k,p_{k+1}]$.
This part of the algorithm terminates when $p_k=1$.
Finally, the algorithm defines
iteratively a decreasing sequence $(\widetilde{p}_k)_{k \geq 0}$ of points in the interval $[0, \widetilde p_0]$
 and extends the definition of $w$ to include $[\widetilde p_{k+1},\widetilde p_k)$.
This part of the algorithm terminates when $\widetilde{p}_k={\green 0}$.

\bigskip

\noindent
{\bf Initialization}:

Let $p_0=\inf\{ p>p^*, (\cav u)(p)=u(p)\}$ and
$\widetilde p_0=\sup\{ p<p^*, (\cav u)(p)=u(p)\}$.
Define a function $w \colon [\widetilde p_0,p_0] \to \R$ as follows:
\begin{itemize}
\item
If $\widetilde p_0=p^*=p_0$, then set $w(p^*) = u(p^*)$.
\item
If $\widetilde p_0<p_0$,
then $w$ is defined
as follows. For every $p\in[\widetilde p_0,p_0]$,
\begin{equation}
\label{split}
w(p):=u(\widetilde{p}_0)\frac{p_0(\mu+1) - p^*}{(p_0-\widetilde{p}_0)(\mu+1)}+u(p_0)\frac{p^*-\widetilde{p}_0(\mu+1)}{(p_0-\widetilde{p}_0)(\mu+1)}
+ p\mu\cdot\frac{u(p_0)-u(\widetilde{p}_0)}{(p_0-\widetilde{p}_0)(\mu+1)}
\end{equation}
(compare this expression with
Eq.~\eqref{eq: middle split2}).
\end{itemize}

\noindent

\noindent{\bf
Increasing part of the algorithm}:

\begin{enumerate}
    \item[I.1.]
    Let $k \geq 0$ and
    suppose that the function $w$ is already defined on the interval $[p_0,p_k]$.
    \item[I.2.]
    If $p_k=1$, the first part of the algorithm terminates;
    go to Step D.1.
    \item[I.3.]
        If $p_k<1$, let $\varphi_k : [p_{k},1] \to \R$ be the solution of the following differential equation:
    \begin{equation}
    \label{varphiA}
    \left\{\begin{array}{ll}
    \varphi_k(p_{k})=w(p_{k}), &  \\
    \varphi_k'(p)=\frac{\mu(u(p)-\varphi_k(p))}{ p-p^*}, & p\in (p_{k},1],
    \end{array}
    \right.
    \end{equation}
    and set
    \begin{equation}\label{eq psi}
    \psi_k(p):=w(p_{k})+(p-p_{k})a(p_{k},w), \ \ \ \forall p\in(p_{k},1].
    \end{equation}

    \item[I.4.]
    If $\rho(p_k,w)>p_k$, define
    \begin{equation}
    \label{equ:i4}
     p_{k+1}:=\rho(p_k,w).
     \end{equation}

    Extend the domain of $w$ to include $(p_k,p_{k+1}]$ by
    \begin{equation}
    \label{equ:w:i4}
     w(p):=\psi_k(p), \ \ \ \forall p \in (p_k,p_{k+1}].
     \end{equation}
    \item[I.5.]
    Otherwise, $\rho(p_k,w)=p_k$. Define
    \begin{eqnarray}\label{equ:i5}
    p_{k+1}:=\inf\{ p>p_{k}\colon~   \rho(p,\varphi_k)>p\},
    \end{eqnarray}
    with $\inf\emptyset=1$.
    Extend the domain of $w$  to include $(p_k,p_{k+1}]$ by
    \[ w(p):=\varphi_k(p), \ \ \ \forall p \in (p_k,p_{k+1}]. \]
    \item[I.6.]
    Increase $k$ by 1 and go to Step I.2.
\end{enumerate}

\noindent
{\bf Decreasing part of the algorithm}:

\begin{enumerate}
\item[D.1.]
    Let $k \geq 0$
 and
suppose that the function $w$ is already defined on the interval $[\widetilde p_k,\widetilde p_0]$.
\item[D.2.]
If $\widetilde{p}_k=0$,
the algorithm terminates.
\item[D.3.]
If $\widetilde{p}_k>0$, let $\varphi_k : [0,\widetilde p_{k}] \to \R$ be the solution of the following differential equation:
\begin{equation}
\label{varphiB}
\left\{\begin{array}{ll}
      \varphi_k(\widetilde p_{k})=w(\widetilde p_{k}), &  \\
      \varphi_k'(p)=\frac{\mu(u(p)-\varphi_k(p))}{ p-p^*}, & p\in [0,\widetilde p_{k}).
      \end{array}
\right.
\end{equation}
Define
\begin{equation}\label{eq psib}
\widetilde{\psi}_k(p):=w(\widetilde{p}_{k})+(p-\widetilde{p}_{k})\widetilde{a}(\widetilde{p}_{k},w), \ \ \ \forall p\in[0,\widetilde{p}_k].
\end{equation}

\item[D.4.]

If $\rho(\widetilde{p}_k,w)<\widetilde{p}_k$, define

\[ \widetilde{p}_{k+1}:=\rho(\widetilde{p}_k,w). \]

Extend the domain of $w$ to include $[\widetilde{p}_{k+1},\widetilde p_k)$ by
\[ w(p):=\widetilde{\psi}(p), \ \ \ \forall p \in [\widetilde{p}_{k+1},\widetilde{p}_{k}). \]

\item[D.5.]
Otherwise, $\rho(\widetilde{p}_k,w)=\widetilde{p}_k$.
Define
$\widetilde{p}_{k+1}:=\sup\{ p<\widetilde{p}_{k}\colon~ \rho(p,\varphi)<p\}$, with $\inf\emptyset=0$.
Extend the domain of $w$  to include $[\widetilde{p}_{k+1},\widetilde{p}_{k})$ by
\[ w(p):=\varphi_k(p), \ \ \ \forall p \in [\widetilde{p}_{k+1},\widetilde{p}_{k}). \]

\item[D.6.]
Increase $k$ by 1 and go to Step~D.2.

\end{enumerate}

The idea is that after the initialization, the algorithm decides for each point $p_k$
whether,
for beliefs slightly above $p_k$, it is optimal for Player 1 to reveal information
or to reveal nothing until the belief reaches $p_{k}$.
The decision is based on comparison of derivatives:
the derivative of $\varphi$, the nonrevealing payoff, is compared to $a(p_k,w)$, the highest possible derivative when splitting.
The strategy that gives the highest derivative is the one that is played, for as long as it's derivative is indeed the higher one.
The changes from a revealing strategy to nonrevealing strategy and vice versa
occur at the points $(p_k)_{k \geq 0}$ and $(\widetilde p_k)_{k \geq 0}$,
where the former lower derivative becomes the higher one.
Since the derivative from the right is equal to the derivative from the left in points where the behavior of the informed player changes,
the corresponding payoff function, and consequently the limit value function, turn out to be differentiable.

On intervals $(p_k,p_{k+1}]$ (resp.~$[\widetilde p_{k+1},\widetilde p_k)$)
where the function $w$ is defined by Step~I.4 (resp.~D.4),
$w$ is linear,
while on intervals $(p_k,p_{k+1}]$ (resp.~$[\widetilde p_{k+1},\widetilde p_k)$)
where the function $w$ is defined by Step~I.5 (resp.~D.5),
$w$ is nonlinear.
We therefore call intervals on which $w$ is defined by Steps~I.4 and~D.4 (resp.~I.5 and~D.5)
\emph{linear intervals} (resp.~\emph{nonlinear intervals}).

\begin{Remarks}
    \label{remark alg}
    \begin{enumerate}
    \item
    In the initialization step,
    under the optimal strategy of Player~1,
    the belief jumps at random times from $\widetilde p_0$ to $p_0$ and back.
    When $p^*$ is an extreme point of this interval, say $p^* = \widetilde p_0$,
substituting $p=p^*$ in Eq.~\eqref{split} yields $w(p^*) = u(p^*)$.
Consequently, in this case at the belief $p^*$ there is no revelation of information.

\item
We can already affirm that on the interval $[\widetilde p_0,p_0]$ the function $w$ coincides with the value function $v$.
        Indeed, by Lemma 2 in \cite{CRRV}, for every $p\in[\widetilde p_0,p_0]$, we have
\[ v(p)=\int_0^\infty e^{-rt}(\cav u)(p^*+(p-p^*)e^{-(\lambda_1+\lambda_2)t})dt,\]
        with $
        (\cav u)(p)= u(\widetilde p_0)+\frac{u(p_0)-\widetilde u(p_0)}{p_0-\widetilde p_0}(p-\widetilde p_0)$.
        This integral can be calculated explicitly  and it coincides with the expression  of $w$ in Eq.~\eqref{split}.

\item
        \label{remark int}
In general Eq.~\eqref{varphiA} does not have an explicit solution.
In the special case that
 $p^*=0$ and $\mu=1$, this equation has an explicit solution, given by
        \[ \varphi_k(p)=\frac{p_{k}}{p}\varphi(p_{k})+\frac{1}{p}\int_{p_{k}}^p u(t)dt.\]

\item
\label{remark:4}
Calculating the limit of the term on the right-hand side of Eq.~\eqref{equ:91} as $p'$ converges to $p$,
we deduce that for every function $f$ we have
        $a(p,f)\geq \mu\cdot\frac{u(p)-f(p)}{p-p^*}$, provided $p \neq p^*$. In particular,
        substituting $f=\varphi_k$, the
         solution of Eq.~\eqref{varphiA}, this gives
        \[a(p,\varphi_k)\geq\varphi_k'(p).\]
        On a nonlinear interval $(p_k,p_{k+1}]$ we can be even more precise:
        for every $p$ such that $\rho(p,\varphi_k)=p$, it follows from the definition of $a(p,\varphi_k)$ that
        \begin{equation}
        \label{eq stpA5} a(p,\varphi_k)=\mu\cdot\frac{u(p)-\varphi_k(p)}{p-p^*}=\varphi_k'(p).
        \end{equation}
        In particular, given that $p_{k+1}=\inf\{ p>p_k \colon \rho(p,\varphi_k)>p\}$,
        Eq.~\eqref{eq stpA5} holds for every
        $p\in(p_k,p_{k+1})$ as well as for the right (resp.~left) derivative of $\varphi_k$ for $p=p_k$ (resp. $p=p_{k+1}$).

    \end{enumerate}
\end{Remarks}

We now state the main theorem of the paper.

\begin{Theorem}
\label{wv}
\begin{enumerate}
\item
For every $k \geq 0$ such that $p_k < 1$ we have $p_k<p_{k+1}$.
\item
For every $k \geq 0$ such that $\widetilde p_k >0$ we have $\widetilde p_{k+1} < \widetilde p_k$.

\item
The algorithm terminates after a finite number of iterations;
that is, there is
$k \geq 0$ such that $p_k = 1$ and there is
 $ k \geq 0$  such that $\widetilde p_k=0$.
\item The function $w$ generated by the algorithm is the limit value function of the game, i.e., $w=v$.
\end{enumerate}
\end{Theorem}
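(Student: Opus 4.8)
The plan is to first establish the structural claims (1)--(3), which guarantee that the algorithm is well defined and extends $w$ to all of $[0,1]$, and then to prove (4) by checking that $w$ satisfies every hypothesis of the characterization in Theorem~\ref{Char}, so that its uniqueness forces $w=v$. I would exploit the symmetry of the construction and argue only for the increasing part ($p>p^*$), the decreasing part being identical after reflecting through $p^*$. Two elementary facts drive the whole argument, and I would record them first: by \eqref{equ:91}, $a(p_k,w)$ is a genuine maximum over $p'\in[p_k,1]$ of the jump-slopes $\tfrac{\mu(u(p')-w(p_k))}{p'-p^*+\mu(p'-p_k)}$; and along a nonrevelation arc the solution of \eqref{varphiA} satisfies, by construction, the relation $\varphi_k'(p)(p-p^*)+\mu(\varphi_k(p)-u(p))=0$, i.e.\ condition~G.2 with equality.

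For (1) and (2) I would argue by cases. In Step~I.4, where $\rho(p_k,w)>p_k$, the conclusion $p_{k+1}>p_k$ is immediate from \eqref{equ:i4}. In Step~I.5, where $\rho(p_k,w)=p_k$, I must show the infimum in \eqref{equ:i5} exceeds $p_k$, i.e.\ that $\rho(p,\varphi_k)=p$ on a right-neighborhood of $p_k$; here I would use that $\varphi_k(p_k)=w(p_k)$ gives $\rho(p_k,\varphi_k)=p_k$, and propagate this by the continuity of $a(\cdot,\varphi_k)$ and $\varphi_k'$ off $p^*$ together with \eqref{eq stpA5}, which says that when no jump strictly improves on nonrevelation one has $a(p,\varphi_k)=\varphi_k'(p)$. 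For termination (3) the decisive input is that $u$, as the value of a matrix game whose entries are affine in $p$, is piecewise rational with finitely many pieces, and in particular switches between local convexity and local concavity only finitely often. I would show that on a maximal interval where $u$ is concave the algorithm follows one nonrevelation arc (Step~I.5) and on a maximal interval where $u$ is convex it performs one jump (Step~I.4), so that each breakpoint of $u$ triggers at most one transition; combined with the strict monotonicity from (1)--(2), this bounds the number of iterations and yields $p_k=1$ and $\widetilde p_k=0$ after finitely many steps.

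The core is (4). Once (1)--(3) are in hand, $w$ is a concatenation of linear pieces (Step~I.4) and nonrevelation arcs (Step~I.5) covering $[0,1]$, and I would verify in turn the hypotheses of Theorem~\ref{Char}. Continuity is built in, since each step pastes $w$ at the shared endpoint $p_k$. Differentiability off $p^*$ amounts to smooth pasting $w'_-(p_k)=w'_+(p_k)$ at every change-over point, which is precisely the defining property of those points: the splitting slope $a(\cdot,w)$ and the nonrevelation slope $\varphi_k'$ agree there, as encoded in \eqref{eq stpA5}. For concavity I would prove $w'$ is non-increasing on $[0,1]$: on a linear piece $w'$ is the constant $a(p_k,w)$, on an arc it is $\varphi_k'$, and the smooth pasting makes these dovetail. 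Condition~G.2 then follows from the two recorded facts: on an arc it is the equality noted above, while on a linear piece, substituting $w(p)=w(p_k)+(p-p_k)a(p_k,w)$ reduces the desired inequality $w'(p)(p-p^*)+\mu(w(p)-u(p))\ge 0$ to $a(p_k,w)\ge\tfrac{\mu(u(p)-w(p_k))}{p-p^*+\mu(p-p_k)}$, which is just the statement that the $p'=p$ term does not exceed the maximum defining $a(p_k,w)$. Condition~G.3 holds because the extreme points of the hypograph lie on the arcs and at the endpoints of the linear pieces, and G.2 holds with equality at all of them --- on arcs by the ODE, and at $p_{k+1}=\rho(p_k,w)$ by a direct computation using \eqref{equ:22b}. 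Finally G.1 is settled by the initialization: \eqref{split} gives $w(p^*)\ge u(p^*)$, with equality exactly when $p^*$ is an endpoint of $[\widetilde p_0,p_0]$, as in Remark~\ref{remark alg}. Uniqueness in Theorem~\ref{Char} then gives $w=v$.

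I expect the main obstacle to be concavity, together with the smooth-pasting half of differentiability. Unlike G.2, concavity is not a pointwise consequence of the definitions but a global monotonicity statement about $w'$, and it requires showing that the solutions of \eqref{varphiA} are concave precisely on the arcs the algorithm selects --- those on which $\rho(p,\varphi_k)=p$ --- and that they splice onto the linear pieces with matching, non-increasing slope. Differentiating \eqref{varphiA} gives $\varphi_k''(p)=\tfrac{\mu u'(p)-(\mu+1)\varphi_k'(p)}{p-p^*}$, whose sign is not determined a priori; tying this sign to the no-beneficial-jump condition that characterizes the arcs, and thereby to the monotone matching of slopes across transitions, is the delicate step on which the whole equivalence rests.
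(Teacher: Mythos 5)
Your overall architecture matches the paper's (establish the structural claims, then verify the hypotheses of Theorem~\ref{Char} and invoke uniqueness), and your treatment of G.1--G.3 on linear pieces and arcs is essentially the paper's Proposition~\ref{lemma:f}. But three of the steps you describe are genuine gaps, and two of them are the actual mathematical content of the theorem. First, for claim (1) in the Step~I.5 case: ``propagating $\rho(p_k,\varphi_k)=p_k$ to a right-neighborhood by continuity'' does not work, because the maximizer map $p\mapsto\rho(p,\varphi_k)$ is \emph{not} continuous --- a distant near-maximizer of $p'\mapsto\frac{\mu(u(p')-\varphi_k(p))}{p'-p^*+\mu(p'-p)}$ can become the true maximizer under an arbitrarily small perturbation of $p$, so the set $\{p:\rho(p,\varphi_k)>p\}$ could a priori accumulate at $p_k$ from the right. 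Your appeal to \eqref{eq stpA5} is circular here: that identity is a \emph{consequence} of $\rho(p,\varphi_k)=p$, not a tool for establishing it. The paper's Lemma~\ref{lemmapk} needs a full contradiction argument: extract a sequence $q^n\searrow p_k$ with $\rho^n:=\rho(q^n,\varphi)>q^n$, show the $\rho^n$ accumulate at $p_k$, use semi-algebraicity and Lemma~\ref{extrema} to get strict concavity of $u$ near $p_k$, and then run an ODE comparison with the affine function $\Psi(p)=\varphi(q')+a(q',\varphi)(p-q')$ to contradict the monotonicity of $p\mapsto a(p,\varphi)$.

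Second, your termination argument rests on a false structural claim: the transitions between arcs and jumps are \emph{not} in bijection with the convexity breakpoints of $u$. In Example~\ref{example3} the switch from the nonrevelation arc to the jump occurs at $\bar p=\frac{\sqrt{13}-1}{6}$, which lies in the interior of $[\frac13,\frac23]$ where $u\equiv 0$ is affine; the transition points are determined by the interplay of $u$, $w$ and the ODE, not by $u$ alone. The paper's Lemma~\ref{lemma:e2} instead argues by contradiction at the accumulation point $p_\infty$: each linear interval $(p_k,p_{k+1}]$ with $f\not\equiv 0$ forces $u''>0$ somewhere inside it, while Lemma~\ref{extrema}(1.ii) forces $u''\le 0$ somewhere inside it, and semi-algebraicity makes these incompatible. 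Third, you correctly identify concavity of $w$ on the arcs as the crux but leave it unresolved; the missing ingredient is Lemma~\ref{extrema}(2): when $\rho(q,\varphi)=q$ the function $F(p)=\mu\frac{u(p)-w(q)}{p-p^*+\mu(p-q)}$ attains its maximum at $p=q$, so $F'(q)\le 0$, which yields $u'(q)\le\frac{1+\mu}{\mu}\varphi'(q)$ and hence $\varphi''(q)\le 0$ from your own formula for $\varphi_k''$. You also omit the boundary analysis of $w'_\pm(p^*)$ needed for concavity when $p^*=p_0$ or $p^*=\widetilde p_0$, where $w$ may fail to be differentiable.
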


\noindent The proof of Theorem \ref{wv} is relegated to Section \ref{sec proof}, after the algorithm is demonstrated on some examples.

\section{Examples}\label{sec examples}

In this section we illustrate the algorithm on the three examples provided in the Introduction.
We will also analyze two variants of the third example;
the first will illustrate the algorithm when there is more than one iteration,
and the second will show that the limit value function may be nondifferentiable at $p^*$.
Recall that in these examples, the state $s_2$ is absorbing, so that $p^* = 0$ and $\mu=r=1$.

\bigskip
\noindent\textbf{Example~\ref{example2}, continued.}
In this example the function $u$ is given by $u(p) = p(1-p)$ for every $p \in [0,1]$.
The function $u$ is concave, and so
$\widetilde{p}_0=p_0=0$, and $w(0)=0$.
We next have to compute the solution of Eq.~\eqref{varphiA} with initial condition $\varphi(0)=0$.  For
$p\in [0,1]$, it is (see Remark~\ref{remark alg}.\ref{remark int})
\[ \varphi(p)=\frac{0}{p}+\frac{1}{p}\int_{0}^p t(1-t)dt = \frac{p}{2}-\frac{p^2}{3}. \]
It follows that
\[ a(p,\varphi)=\sup_{p'\in (p,1]}\frac{p'(1-p')-\frac{p}{2}+\frac{p^2}{3}}{2p'-p}.\]
For every $p\in[0,1]$ the supremum is obtained only at $p'=p$, that is $\rho(p,\varphi)=p$ for every $p \in[0,1]$.
This implies that the condition of Step~I.5 holds, $p_1=1$,
and the first part of the algorithm terminates.
Since $\widetilde p_0=0$, the second part of the algorithm is vacuous.
In conclusion, the limit value function is given by
\[v(p)=\frac{p}{2}-\frac{p^2}{3}, \; \forall p\in[0,1],\]
and the optimal strategy of Player~1 is never to reveal his information.

\bigskip

\noindent\textbf{Example~\ref{example1}, continued.}
Recall that in this example the function $u$ is given by $u(p)=-p(1-p)$ for every $p \in [0,1]$.
Since
$(\cav u)(p^*)=0=\alpha u(0) + (1-\alpha)u(1),~\forall \alpha\in[0,1]$,
we have $\widetilde{p}_0=0$ and $p_0=1$.
From Eq.~\eqref{split} we obtain that $v(p)=0$ for every $p\in[0,1]$.
Consequently, the optimal strategy of Player~1 is to always reveal his information.

\bigskip

\noindent\textbf{Example~\ref{example3}, continued.}
In this example the function $u(p)$ is
given
by Eq.~\eqref{uex3} and is represented by the dotted line in  Figure~\arabic{figurecounter2}.
For this example the algorithm runs as follows.
Since $p^* = 0$ we have $\widetilde{p}_0=0$.
Simple calculations show that $p_0=\frac{1}{3}$, and from Eq.~\eqref{split} we have $w(p)=-\frac{2}{3}+p$ for every $p\in[0,\frac{1}{3}]$.
On $[\frac{1}{3},\frac{2}{3}]$ the solution of Eq.~\eqref{varphiA} is
$\varphi_0(p)=\frac{1}{3p}v(\frac{1}{3})+\frac{1}{p}\int_{\frac{1}{3}}^p
0\; dx=-\frac{1}{9p}$.
It follows that, for $p\in[\frac{1}{3},\frac{2}{3}]$,

\[\begin{array}{rl}
a(p,\varphi_0)=&\sup_{p'\in (p,1]}\frac{u(p')-\varphi_0(p)}{2p'-p}\\
=&\max\left\{ \sup_{p'\in(p,\frac{2}{3})}\frac{-\varphi_0(p)}{2p'-p};
\sup_{p'\in[\frac{2}{3},1]}\frac{\frac{9p'^2-9p'+2}{3(2p'-1)}+\frac{1}{9p}}{2p'-p}
\right\}\\
=&\max\left\{ \frac{1}{9p^2},\frac{6p+1}{9p(2-p)}\right\},
\end{array}
\]
where the suprema are respectively attained
at $p$ and 1.
Solving
$\frac{1}{9p^2}=\frac{6p+1}{9p(2-p)}$,
we obtain
\[ a(p,\varphi_0)=\left\{
\begin{array}{ll}
\frac{1}{9p^2}&\mbox{ with }\rho(p,\varphi_0)=p, \mbox{ for }p<\bar p:=\frac{-1+\sqrt{13}}{6},\\
\frac{6p+1}{9p(2-p)}&\mbox{ with }\rho(p,\varphi_0)=1, \mbox{ for }p\geq\bar p.
\end{array}\right.
\]
Therefore  (using Step I.5) $p_1=\inf\{ p>p_0, \rho(p,\varphi_0)>p\}=\bar p$ and $w(p)=\varphi_0(p)=-\frac{1}{9p^2}$ for $p\in[\frac{1}{3},\bar p]$.
Finally (Step I.4) $p_2=\rho(\bar p,w)=1$ and $w(p)=-\frac{1}{9p}+\frac{1}{9p^2}(p-\bar p)$ on $[\bar p,1]$,
and the algorithm terminates.
In conclusion,
the limit value function is given by

\begin{equation}
\label{vexample1}
 v(p)=\left\{
\begin{array}{ll}
p-\frac{2}{3}, & \mbox{ if } 0\leq p<\frac{1}{3},\\
-\frac{1}{9p},  & \mbox{ if } \frac{1}{3}\leq p<\bar p, \mbox{  with } \bar p=\frac{\sqrt{13}-1}{6} (\simeq 0,434),\\
-\frac{1}{9\bar p}+\frac{1}{9\bar p^2}(p-\bar p), & \mbox{ if } \bar p\leq p\leq 1.
\end{array}
\right.
\end{equation}
In particular
\[ v(1)=\frac{1}{9\bar p^2}(2\bar p-1).\]

We will now solve two variants of Example~\ref{example3},
in which state $s_2$ is not absorbing and $\mu$ is not 1.

\noindent\textbf{Example~\ref{example3}, variation a.}

We here analyze the algorithm when $\lambda_1=3$ and $\lambda_2=r=1$, so that $\mu=\tfrac{1}{4}$ and $p^*=\frac{1}{4}$.

\begin{enumerate}
\item[1.] Initialization:
Simple calculations yield that
$\widetilde{p}_0=\frac{1}{3}$ and $p_0=1$.
By Eq.~\eqref{split} we obtain $w(p)=\frac{2}{15}(3p-2)$ for $p\in [0, \frac 13]$.
In particular $w(\frac 13)=-\frac 2{15}$.

\item[2.]
Now we have to compute the solution of Eq.~\eqref{varphiA} with the initial condition $\varphi(\frac 13)=-\frac 2{15}$.  For
$p\in [\frac 13,\frac 23]$,
the solution is
\[ \varphi(p)= -\frac {2}{15}3^{-\frac 14}(4p-1)^{-\frac 14},\]
and we obtain
\[ a(p,\varphi)=\max\left\{\frac{u(p)-\varphi(p)}{4p-1},\frac{u(1)-\varphi(p)}{4-p}\right\}=\max\left\{-\frac{\varphi(p)}{4p-1},\frac{\frac 23-\varphi(p)}{4-p}\right\}.\]
Following Step~I.5 of the algorithm, $p_1$ is the last  $p\in (\frac 13,1$] that satisfies the relation
\[ a(p,\varphi)=\frac{u(p)-\varphi(p)}{4p-1}.\]
On $[\frac 13,\frac 23]$, this relation is equivalent to
\[ (4p-1)^{-\frac 54}(1-p)=3^{\frac 14},\]
which yields $p_1\simeq 0.3858$.

\item[3.] The next step is to determine $p_2$ and the function $w$ on the interval $(p_1,p_2]$. As already noted, we have
\[ a(p_1,w)=\sup_{p\in[p_1,1]}\frac{u(p')-w(p_1)}{4p-p_1-1}=\frac{\frac 23-w(p_1)}{4-p_1},\]
and the supremum is
attained at $p'=1$.
Let $\Psi(p)=w(p_1)+(p-p_1)a(p_1,w)$, for every $p\in [p_1,1]$.
It can be shown that, for every $p\in (p_1,1)$ we have $a(p_1,w)>\frac{ u(p)-\Psi (p)}{4p-1}$. Therefore $p_2=1$, and, for every $p\in [p_1,1]$,
$w(p)=\Psi(p)$.
The first part of the algorithm ends.

\item[4.] Since $\widetilde p_0=0$, the second part of the algorithm
is vacuous.

\end{enumerate}

In conclusion,
the limit value function is given by

\begin{equation}
\label{vexample2}
v(p)=\left\{
\begin{array}{ll}
\frac{2}{15}(3p-2), & \mbox{ if } 0\leq p<\frac{1}{3},\\
-\frac 2{15}3^{-\frac 14}(4p-1)^{-\frac 14},  & \mbox{ if } \frac{1}{3}\leq p<p_1,\\
ap+b & \mbox{ if } p\in (p_1,1],
\end{array}
\right.
\end{equation}
where $a=\frac{\frac 23-w(p_1)}{4-p_1}\simeq 0.21709$ and $b=\frac 23-4a\simeq -0.20177$.

\bigskip

\noindent\textbf{Example~\ref{example3}, variation b.}

We here analyze the algorithm when $\lambda_1 = \tfrac{4}{3}$, $\lambda_2 = \tfrac{2}{3}$, and $r=1$, so that
$p^*=\frac{1}{3}$ and $\mu=\frac{1}{2}$.

\begin{enumerate}
\item[1.] Initialization:
Simple calculations show that
 $(\cav u)(\frac{1}{3})=u(\frac{1}{3})=0$
 and
$\widetilde{p}_0=p_0=\frac{1}{3}$.

\item[2.]
We turn to the first part of the algorithm.
The reader can verify that
\[ a(\tfrac{1}{3},w) =\sup_{p' \in (\frac{1}{3},1]}\frac{\mu(u(p')-w(\frac{1}{3}))}{ p'-\frac{1}{3}+\mu(p'-\frac{1}{3})}
=\max_{p' \in [\frac{2}{3},1]}\frac{\frac{9p'^2-9p'+2}{6p'-3}-0}{ 2(p'-\frac{1}{3})+(p'-\frac{1}{3})}
=\max_{p' \in [\frac{2}{3},1]}\frac{3p'-2}{6p'-3}.\]
This maximum is obtained at $p'=1$.
Therefore,
 $\rho(p_0,w)=1$, and the condition of Step~I.4 holds. We obtain $a(\frac{1}{3},w)=\frac{1}{3}$, and the first part of the algorithm ends with $p_1=1$.

\item[3.] For the second part of the algorithm we compute
\[ \widetilde{a}(\widetilde{p}_0,w)=\widetilde{a}(\tfrac{1}{3},w)=\inf_{p'\in[0,\frac{1}{3})}\frac{\mu(u(p)-w(\frac{1}{3}))}{p'-\frac{1}{3}+\mu(p'-\frac{1}{3})}
=\inf_{p'\in[0,\frac{1}{3})}\frac{\frac{9p'^2-9p'+2}{6p'-3}}{3p'-1}=\inf_{p'\in[0,\frac{1}{3})}\frac{3p'-2}{6p'-3}. \]

This infimum is obtained at $p'=0$.
Therefore,
 $\rho(\widetilde{p}_0,w)=0$, and the condition of Step~D.4 holds. We obtain $\widetilde{a}(\frac{1}{3},w)=\frac{2}{3}$, and the algorithm ends with $\widetilde{p}_1=0$.
\end{enumerate}

In conclusion, the limit value function is given by:

\begin{equation}
\label{vexample3}
 v(p)=\left\{
\begin{array}{ll}
\frac{1}{3}p-\frac{1}{9}, & \mbox{ if } 0\leq p<\frac{1}{3},\\
\frac{2}{3}p-\frac{2}{9},  & \mbox{ if } \frac{1}{3}\leq p<\bar 1,
\end{array}
\right.
\end{equation}
and it is not differentiable at $p^*=\frac{1}{3}$.

\subsection{The optimal strategy of the informed player}

The algorithm provided above allows one to describe the process $(p_t)_{t \geq 0}$ that attains the maximum in Eq.~\eqref{P1}.
As shown in \cite{CRRV},
this process
allows one to approximate an $\ep$-optimal strategy for the informed player in the game $G^{(n)}(p)$, provided $n$ is sufficiently large.
The $\ep$-optimal strategy depends on a parameter $q$, that changes along the play.
The initial value of $q$ is $p$, the initial belief of Player~2.
In stage $n$, if $q$ is an interior point of a
linear interval $[p_k,p_{k+1}]$,
then Player 1 reveals some information by performing a randomization which depends on his excess information.
This randomization changes Player 2's belief to either $p_k$ or $p_{k+1}$, with
appropriate probabilities. Using the terminology of \cite{CRRV}, over these
intervals, the strategy of Player~1 is revealing.
If, on the other hand, $q$ is an interior point of a nonlinear interval $[p_k,p_{k+1}]$,
then Player 1 reveals no information
(and plays the optimal strategy in the one-shot game among those that reveal no information).

\section{Proof of Theorem~\ref{wv}}\label{sec proof}

This section is devoted to the proof of Theorem~\ref{wv}.
In Section~\ref{sec:51} we study the sequence $(p_k)$ and show that the algorithm provided in Section~\ref{section:algorithm} terminates.
In Section~\ref{sec:52} we show that the function $w$ is concave and differentiable everywhere, except, possibly, at $p^*$.
In Section~\ref{sec:53} we show that $w=v$.

\subsection{On the sequence $(p_k)$}
\label{sec:51}

In this section we study the sequence $(p_k)$.
We will show that it is strictly increasing (Lemma~\ref{lemmapk})
and that if $p_{k}$ is defined by Eq.~\eqref{equ:i4} then $p_{k+1}$ is defined by Eq.~\eqref{equ:i5}, and vice versa
(Lemma~\ref{lemma:alternate}).
We will then show that there is $k \in \N$ such that $p_k=1$.
We start
with a technical lemma that will determine the value
  of $u$ and $\varphi$ on the elements of the sequence $(p_k)$.

\begin{Lemma}
    \label{extrema}
    Let $q\in(p^*,1)$. Suppose that
   $w$ is defined at $q$ and set $\rho:=\rho(q,w)$. Suppose that $u$ is twice differentiable on some open interval
    $I$ that contains $\rho$.
    \begin{enumerate}
        \item
     If $q<\rho$, then
            \begin{itemize}
                \item[i)] $u'(\rho)=\frac{1+\mu}{\mu}a(q,w)$,
                \item[ii)] $u''(\rho)\leq 0$.
            \end{itemize}
        \item Let
        $\varphi : [q,1] \to \R$ be a function satisfying
         $\varphi(q)=w(q)$ and $\varphi'(p)(p-p^*)=\mu\bigl(u(p)-\varphi(p)\bigr)$ on $[q,1]$. If $q=\rho$, then $\varphi''(q)\leq 0$.
        \end{enumerate}
\end{Lemma}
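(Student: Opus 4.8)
The plan is to read both parts as the first- and second-order optimality conditions for the one-variable maximization that defines $a(q,w)$ and $\rho(q,w)$. Fix $q$, write $c:=w(q)$, and introduce the auxiliary function
\[ h(p'):=\frac{\mu\bigl(u(p')-c\bigr)}{p'-p^*+\mu(p'-q)}, \qquad p'\in[q,1], \]
so that $a(q,w)=\max_{p'\in[q,1]}h(p')$ and $\rho=\rho(q,w)$ is the largest maximizer. Writing $h=N/D$ with $N(p')=\mu(u(p')-c)$ and $D(p')=(1+\mu)p'-p^*-\mu q$, one has $N'=\mu u'$, $N''=\mu u''$, $D'=1+\mu$, $D''\equiv0$, and $D(p')=(p'-p^*)+\mu(p'-q)>0$ for every $p'\geq q>p^*$. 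The hypothesis that $u$ is twice differentiable on an \emph{open} interval $I$ contained in the domain $[0,1]$ and containing $\rho$ makes $h$ of class $C^2$ near $\rho$ and, since an open interval inside $[0,1]$ can contain neither $0$ nor $1$, forces $\rho\in(0,1)$. This is the key point that lets me use the interior (equality) optimality condition in Part~1 rather than a mere one-sided inequality.

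For Part~1, since $q<\rho$ the maximizer $\rho$ is interior to $[q,1]$, so $h'(\rho)=0$, i.e. $N'(\rho)D(\rho)=N(\rho)D'(\rho)$, which reads $\mu u'(\rho)D(\rho)=\mu(u(\rho)-c)(1+\mu)$. Dividing by $\mu D(\rho)$ and using $h(\rho)=a(q,w)$ gives $u'(\rho)=\tfrac{1+\mu}{\mu}a(q,w)$, proving (i). For (ii) I differentiate once more: because the first-order part of $h''$ carries the factor $N'D-ND'$, which vanishes at the critical point $\rho$, and because $D''\equiv0$, the expression collapses to $h''(\rho)=\mu u''(\rho)/D(\rho)$. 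The second-order necessary condition $h''(\rho)\leq0$ at an interior maximum, combined with $D(\rho)>0$ and $\mu>0$, yields $u''(\rho)\leq0$.

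For Part~2, the meaning of $q=\rho$ is that the maximum of $h$ over $[q,1]$ is attained at the left endpoint $q$ (and at no larger point), so only the one-sided condition $h'(q^{+})\leq0$ is available. The crux is to evaluate $h'(q)$ through the defining ODE of $\varphi$: since $\varphi(q)=w(q)=c$ and $\varphi'(q)(q-p^*)=\mu(u(q)-\varphi(q))$, we get $N(q)=\varphi'(q)(q-p^*)$, whence $h'(q)=\bigl(\mu u'(q)-(1+\mu)\varphi'(q)\bigr)/(q-p^*)$. Differentiating the ODE once (legitimate, since $u\in C^2$ near $q$ makes $\varphi\in C^2$) gives $\varphi''(q)(q-p^*)+\varphi'(q)=\mu(u'(q)-\varphi'(q))$, i.e. $\varphi''(q)(q-p^*)=\mu u'(q)-(1+\mu)\varphi'(q)$. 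Comparing the two displays produces the clean identity $\varphi''(q)=h'(q)$, and since $h'(q)\leq0$ we conclude $\varphi''(q)\leq0$.

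The main obstacle is not the algebra but the bookkeeping about which optimality condition applies: I must justify that twice differentiability pins $\rho$ to the interior in Part~1 (so that both $h'(\rho)=0$ and $h''(\rho)\leq0$ hold), and correctly interpret $q=\rho$ in Part~2 as $q$ being a left-endpoint maximizer, yielding only $h'(q^{+})\leq0$. Throughout I would also verify $D>0$, which governs every sign in the argument, and confirm $h(\rho)=a(q,w)$ so that the right-hand side of (i) is genuinely the slope $a(q,w)$ and not merely some value of $h$.
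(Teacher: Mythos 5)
Your proof is correct and takes essentially the same route as the paper's: the paper introduces the same quotient $F(p)=\mu\bigl(u(p)-w(q)\bigr)/\bigl(p-p^*+\mu(p-q)\bigr)$, obtains (i) and (ii) from the first- and second-order conditions $F'(\rho)=0$ and $F''(\rho)\leq 0$ at the interior maximizer, and proves Part~2 from $F'(q)\leq 0$ together with differentiating the ODE for $\varphi$. Your identity $\varphi''(q)=h'(q)$ is just a compact restatement of the paper's two displayed equations, and your explicit remark that the open-interval hypothesis forces $\rho$ to be interior only makes explicit what the paper assumes tacitly.
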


\begin{proof}
We start with the first claim.
 Set  $\Delta(p):=p-p^*+\mu(p-q)$ and
 $F(p):=\mu\cdot\frac {u(p)- w(q)}{\Delta(p)}$
for every
  $p\in I$.
 Since $u$ is differentiable
  on $I$, the function $F$ is also differentiable on $I$, and
    its derivative is
    \begin{equation}
    \label{fprim}
    F'(p)= \mu \cdot \frac{u'(p)\Delta(p)-(1+\mu)(u(p)-w(q))}{\Delta^2(p)}.
    \end{equation}
 If $q<\rho$,
 then
 $\rho$ is a local extremum in $I$ of $F$, and we have
    $F'(\rho)=0$.
From Eq.~\eqref{fprim} we obtain
    \begin{equation}
            \label{F00}
    u'(\rho)=(1+\mu)\frac {u(\rho)-w(q)}{\Delta(\rho)}= (1+\mu)\frac {u(\rho)-w(q)}{\rho-p^*+\mu(\rho-q)}.
    \end{equation}
Item {\em (i)} follows by the definition of $a(q,w)$.\\
The second derivative of $F$ at $\rho$ is
\begin{equation}
\label{equ:F''}
    F''(\rho)= \mu \cdot\frac{u''(\rho)\Delta^2(\rho)-2(1+\mu)\left(u'(\rho)\Delta(\rho)-(1+\mu)(u(\rho)-w(q))\right)}{\Delta^3(\rho)}.
\end{equation}
From Eq.~\eqref{F00} the second term in the numerator in Eq.~\eqref{equ:F''} vanishes,
hence
\[ u''(\rho)= \tfrac{1}{\mu}F''(\rho)\Delta(q). \]
Since $\rho$ is a local maximum of $F$, we have $F''(\rho)\leq 0$.
Since $\Delta(\rho) > 0$, item {\em (ii)} follows.

We turn to the second claim.
     If $\rho=q$, the maximum of $F$ on $[q,1]$ is attained
at $q$. Therefore $F'(q)\leq 0$. It follows from Eq.~\eqref{fprim} that
    \begin{equation}
    \label{u'}
     u'(q)\leq (1+\mu)\frac{u(q)-\varphi(q)}{q-p^*}=\frac{1+\mu}{\mu}\varphi'(q).
     \end{equation}
    Further, from the relation $\varphi'(q)(q-p^*)=\mu(u(q)-\varphi(q))$, we get
    \begin{equation}
    \label{equ:888}
    \varphi''(q)(q-p^*)=\mu u'(q)-(1+\mu)\varphi'(q).
    \end{equation}
    Eqs.~\eqref{u'} and~\eqref{equ:888} imply that
     $\varphi''(q)\leq 0$.
\end{proof}

\begin{Lemma}
    \label{lemmapk}
    For all
    $k \geq 0$ such that $p^*\leq p_k<1$, we have
    $p_k<p_{k+1}$.
\end{Lemma}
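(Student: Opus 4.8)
The plan is to split the argument according to which branch of the increasing part produces $p_{k+1}$ from $p_k$. If $p_{k+1}$ is defined by Step~I.4, then by \eqref{equ:i4} we have $p_{k+1}=\rho(p_k,w)$, and the hypothesis of that step is exactly $\rho(p_k,w)>p_k$, so there is nothing to prove. The entire content therefore lies in Step~I.5, where $\rho(p_k,w)=p_k$ and $p_{k+1}$ is defined by \eqref{equ:i5} as $\inf\{p>p_k:\rho(p,\varphi_k)>p\}$; I must show this infimum exceeds $p_k$, i.e. that $\rho(p,\varphi_k)=p$ for every $p$ in some right-neighbourhood of $p_k$. I will assume $p_k>p^*$, where $a(\cdot,\varphi_k)$ and $\rho(\cdot,\varphi_k)$ are defined and continuous; the boundary case $p_k=p^*$, which can occur only at initialization, is handled by the same estimates together with a one-sided continuity argument.

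It is convenient to reformulate the target. Writing $G_p(p'):=\frac{\mu(u(p')-\varphi_k(p))}{p'-p^*+\mu(p'-p)}$, the number $a(p,\varphi_k)$ from \eqref{equ:91} is the maximum of $G_p$ over $[p,1]$ and $\rho(p,\varphi_k)$ is its largest maximizer. By the differential equation \eqref{varphiA} one has $G_p(p)=\varphi_k'(p)$, and by Remark~\ref{remark:4} always $a(p,\varphi_k)\ge\varphi_k'(p)$. Hence $\rho(p,\varphi_k)=p$ is equivalent to $G_p(p')<\varphi_k'(p)$ for every $p'\in(p,1]$, i.e. to the left endpoint being the unique maximizer. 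At $p=p_k$ this holds by the standing hypothesis of Step~I.5 (since $w(p_k)=\varphi_k(p_k)$ we have $\rho(p_k,w)=\rho(p_k,\varphi_k)=p_k$), and in particular $a(p_k,\varphi_k)=\varphi_k'(p_k)$ by \eqref{eq stpA5}. The goal is to propagate this to nearby $p>p_k$.

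I would argue by contradiction via compactness. If the infimum in \eqref{equ:i5} equalled $p_k$, there would be a sequence $q_n\downarrow p_k$ together with maximizers $r_n:=\rho(q_n,\varphi_k)\in(q_n,1]$; passing to a subsequence, $r_n\to r^\star\in[p_k,1]$. If $r^\star>p_k$, then letting $n\to\infty$ in $a(q_n,\varphi_k)=G_{q_n}(r_n)\ge G_{q_n}(p')$ and using the continuity of $u$, $\varphi_k$ and $a(\cdot,\varphi_k)$ away from $p^*$ shows that $r^\star$ is a maximizer of $G_{p_k}$ with $r^\star>p_k$, contradicting the uniqueness of $p_k$ as the maximizer at $p_k$. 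This is the easy subcase, settled purely by continuity.

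The main obstacle is the remaining subcase $r^\star=p_k$, in which the maximizers collapse onto the diagonal — a tangency at $p_k$ between the no-revelation slope $\varphi_k'$ and the best split slope $a(\cdot,\varphi_k)$. Here I would invoke Lemma~\ref{extrema}: since $p_k<1$, for large $n$ the maximizer $r_n$ is interior, so part~1 gives $u'(r_n)=\tfrac{1+\mu}{\mu}a(q_n,\varphi_k)$ and $u''(r_n)\le0$, while part~2 together with the Step~I.5 hypothesis gives $\varphi_k''(p_k)\le0$ and $u'(p_k)\le\tfrac{1+\mu}{\mu}\varphi_k'(p_k)$. A second-order expansion of $G_{q_n}$ about the diagonal, using the identities $\partial_{p'}G_p(p')\big|_{p'=p}=\varphi_k''(p)$ and the fact that at critical points $\partial_{p'}^2G_{q_n}$ has the sign of $u''$, forces all these inequalities to become equalities in the limit, so that $u''$ must change sign in every right-neighbourhood of $p_k$. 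Since $u$ is the value of a finite one-shot game it is piecewise rational, hence real-analytic near $p_k$; an accumulating sign change of $u''$ then forces $u$ to be affine on a right-neighbourhood of $p_k$, on which $G_{q_n}$ is a ratio of affine functions and admits no interior maximizer — the desired contradiction. Managing this degenerate tangency, rather than the continuity argument of the easy subcase, is where the real work lies.
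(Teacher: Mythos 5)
Your reduction to Step~I.5, the contradiction setup with $q_n\downarrow p_k$ and maximizers $r_n=\rho(q_n,\varphi_k)$, and the dichotomy on the accumulation point $r^\star$ all match the paper's proof; the easy subcase $r^\star>p_k$ is handled there exactly as you propose, by continuity of $a(\cdot,\varphi_k)$. The gap is in the hard subcase $r^\star=p_k$, which you dispatch in one sentence: that a ``second-order expansion of $G_{q_n}$ about the diagonal forces all these inequalities to become equalities in the limit, so that $u''$ must change sign in every right-neighbourhood of $p_k$.'' This does not follow from what you have established. Lemma~\ref{extrema}(1) gives you $u''(r_n)\leq 0$, and semi-algebraicity then yields $u''\leq 0$ on a right-neighbourhood of $p_k$ --- i.e.\ $u$ is \emph{concave} there, with no sign change in sight. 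To manufacture a point with $u''\geq 0$ you would need an interior local minimum of $G_{q_n}$ on $(q_n,r_n)$ (where $\partial^2_{p'}G_{q_n}\geq 0$, hence $u''\geq 0$), and such a minimum exists only if $G_{q_n}$ initially decreases at $q_n$, i.e.\ only if $\partial_{p'}G_{q_n}(q_n)=\varphi_k''(q_n)<0$. Lemma~\ref{extrema}(2) controls $\varphi_k''$ only at points where $\rho(q,\varphi_k)=q$, not at your $q_n$'s, so nothing in your argument delivers this. (The claim could perhaps be rescued: passing to the limit in $a(q_n,\varphi_k)=\tfrac{\mu}{1+\mu}u'(r_n)$ gives $\varphi_k''(p_k^+)=0$, and the linear ODE satisfied by $\varphi_k''$, namely $(\varphi_k'')'(p)(p-p^*)=\mu u''(p)-(2+\mu)\varphi_k''(p)$, would then propagate $u''<0$ into $\varphi_k''<0$ on a right-neighbourhood. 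But none of this is in your write-up, it is not a ``second-order expansion about the diagonal,'' and it requires $p_k>p^*$.)

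The strictly concave case is precisely where the paper does its real work, and it resolves it by an entirely different, global mechanism: strict concavity of $u$ plus Lemma~\ref{extrema}(1.i) forces $a(q^{n},\varphi)<a(q',\varphi)$ for a suitable $q'\in(q^{n+1},q^n)$, while a comparison of $\varphi$ with the tangent line $\Psi(p)=\varphi(q')+a(q',\varphi)(p-q')$ --- carried out by solving the linear ODE for $\gamma=\Psi-\varphi$ with an explicit integrating factor $(p-p^*)^{-\mu}$ and using nonnegativity of the forcing term --- yields $\varphi(q^n)-\varphi(q')\leq a(q',\varphi)(q^n-q')$ and hence $a(q',\varphi)\leq a(q^n,\varphi)$, a contradiction. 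The affine subcase is excluded separately by a direct computation (your observation that a M\"obius function has no interior maximizer is a reasonable substitute there). As it stands, your proposal is missing the argument for the strictly concave case, which is the core of the lemma.
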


\begin{proof}
 Since the function $u$ is semi-algebraic, there exists
  $\epsilon>0$  such that $u$ is smooth on
  the interval
  $(p_k,p_k+\epsilon)$.

If $\rho(p_k,w)>p_k$, then
$(p_k,p_{k+1}]$ is a linear interval and $p_{k+1}=\rho(p_k,w)>p_k$: the claim is trivially satisfied.
If  $\rho(p_k,w)=p_k$, then
$(p_k,p_{k+1}]$ is a nonlinear interval
and
    \begin{equation}
    \label{pk+1=pk}
    p_{k+1}=\inf\{ p>p_k, \rho(p,\varphi)>p\},
    \end{equation}
 where $\varphi$ is the solution of Eq.~\eqref{varphiA}.
    In this case the result is not trivial. We shall prove it by contradiction.

Suppose to the contrary
that $p_{k+1}=p_k$. Then Eq.~\eqref{pk+1=pk} implies the existence of a sequence $(q^n)_{n\in\N}\subset (p_k,p_k+\epsilon)$ such that $q^n\searrow p_k$
and $\rho(q^n,\varphi)>q^n$ for every $n\in\N$.
In what follows, we set $\rho^n:=\rho(q^n,\varphi)$.

    Let $\bar\rho$ be an accumulation point of the sequence
    $(\rho^n)_{n \in \N}$
     and denote still by $(q^n)_{n\in\N}$ a subsequence of $(q^n)_{n\in\N}$
      that converges to $\bar\rho$.
    Since $p\mapsto a(p,\varphi)$ is continuous,%
    \footnote{Since we consider a nonlinear interval, $w=\varphi$ on $(p_k,p_{k+1}]$,
    hence $a(p,\varphi) = a(p,w)$ on that interval.}
    and since
    \begin{equation}
    \label{equ:889}
     a(q^n,\varphi)=\mu\cdot\frac{u(\rho^n)-\varphi(q^n)}{\rho^n-p^*+\mu(\rho^n-q^n)},
     \end{equation}
 letting $n$ tend to $\infty$ in Eq.~\eqref{equ:889}
     we get
    \[ a(p_k,\varphi)=\mu\cdot\frac{u(\bar\rho)-\varphi(p_k)}{\bar\rho-p^*+\mu(\bar\rho-p_k)}.\]
    By
     assumption, the value $a(p_k,\varphi)$ is attained only at $p_k=\rho(p_k,\varphi)$. Thus $\bar\rho=p_k$.
 By taking a subsequence of $(q^n)_{n \in \N}$, still denoted
  $(q^n)_{n \in \N}$, we can assume that $\rho^{n+1}<q^n<\rho^n$ for every $n\in\N$.

    By Lemma
     \ref{extrema}{(1)} we have
   $u''(\rho^n)\leq 0$ for every $n\in\N$.
    The function $u$
     is
     semi-algebraic,
      hence
    $u''(p)\leq 0$ on an interval $(p_k,q^{n_0})$ for some $n_0$ large enough. This implies that  $u'$ is nonincreasing  and $u$ is concave on $(p_k,q^{n_0})$.
    We can
     strengthen this conclusion
    : we can choose $n_0$ such that $u'$ is strictly decreasing and $u$ is strictly concave on  $(p_k,q^{n_0})$.
    Indeed, suppose that this
     does not hold. In this case $u$ is linear in a small one-sided neighborhood of $p_k$: there exist
    $\widetilde\epsilon\leq\epsilon$ and $\alpha,\beta\in\R$ such that $u(p)=\alpha p+\beta$ for all  $p\in [p_k,p_k+\widetilde\epsilon]$.
    By Lemma~\ref{extrema}{(1)},
    it follows that
    $\frac{\mu}{1+\mu}\alpha=a(q^n,\varphi)=a(p_k,\varphi)$ for $n$ sufficiently large.
    By Eq.~\eqref{equ:889}
     we therefore have
    \[ a(p_k,\varphi)=\mu\cdot\frac{\frac{1+\mu}{\mu}a(p_k,\varphi)\rho^n+\beta-\varphi(q^n)}{\rho^n-p^*+\mu(\rho^n-q^n)}\]
    or, equivalently, $\beta+\frac{a(p_k,\varphi)}{\mu}(p^*+\mu q^n)=\varphi(q^n)$.
     In addition, for every $p>\rho^n$ we have by the definition of $\rho^n$
    \[ a(p_k,\varphi)=a(q^n,\varphi)> \mu\cdot\frac{u(p) - \varphi(q^n)}{p-p^*+\mu(p-q^n)}=
    \mu\cdot\frac{\frac{1+\mu}{\mu}a(p_k,\varphi)p+\beta-\varphi(q^n)}{p-p^*+\mu(p-q^n)},\]
     or, equivalently,
    $\beta+\frac{a(p_k,\varphi)}{\mu}(p^*+\mu q^n)<\varphi(q^n)$,
    a contradiction. It follows that $u'$ is strictly decreasing in a small one-sided neighborhood of $p_k$.

Fix $n> n_0$.
    Since $u'$ is strictly decreasing on $(p_k,q^{n_0})$, we have
    $u'(\rho^{n})<u'(\rho^{n+1})$.
By Lemma~\ref{extrema}{(1)} applied to $q^n$ and $q^{n+1}$, we deduce that
$a(q^n,\varphi)<a(q^{n+1},\varphi)$.
    Since the function $p\mapsto a(p,\varphi)$ is continuous,
    there exists
     $q'\in  (q^{n+1},q^n)$ such that $\rho' := a(q',\varphi) \in (\rho^{n+1},\rho^n)$ and
    \begin{equation}
    \label{a'an}
    a(q^n,\varphi)<a(q',\varphi).
    \end{equation}
Consider now the function $\Psi$ on $[q',1]$ defined by $\Psi(p):=\varphi(q')+a(q',\varphi)(p-q')$.
 The reader can verify that the function
 $\Psi$ is a solution of
    \[
    \begin{array}{l}
    \Psi(q')=\varphi(q'),\\
    \Psi'(p)(p-p^*)=\mu(\bar u(p)-\Psi(p)), \; p\in[q',1],
    \end{array}
    \]
    with $\bar u(p)=\varphi(q')+\frac{a(q',\varphi)}{\mu}\bigl(p-p^*+\mu(p-q')\bigr)$
     for every
    $p\in [q',1]$.
    It follows that
     the function $\gamma : [q',1] \to \R$ defined by
 $\gamma(p):=\Psi(p)-\varphi(p)$ is a solution of
    \begin{equation}
    \label{gamma}
    \begin{array}{l}
    \gamma(q')=0,\\
    \gamma'(p)(p-p^*)=\mu(\widetilde u(p)-\gamma(p)), \; p\in(q',1),
    \end{array}
    \end{equation}
    with $\widetilde u(p)=\bar u(p)-u(p)$. Eq.~\eqref{gamma} can be solved quasi-explicitly:
    \[ \gamma(p)=c(p)(p-p^*)^{-\mu},\; p\in[q',1],\]
    with $c(q')=0$ and $c'(p)=\mu \widetilde u(p)(p-p^*)^{\mu-1}$.

    By the definition of $a(q',\varphi)$ and $\bar u$, the function $\widetilde u$ is nonnegative on $[q',1]$.
    It follows that, for every $p\in[q',1]$ we have $c(p)\geq 0$ and
 consequently
$\gamma(p)\geq 0$, which is equivalent to $\Psi(p) \geq \varphi(p)$.
Substituting
 $p=q^{n}$, we obtain in particular that $\Psi(q^{n})\geq \varphi(q^{n})$, or, equivalently,
    \begin{equation}
    \label{phipsi}
    \varphi(q^{n})-\varphi(q')\leq a(q',\varphi)(q^{n}-q').
    \end{equation}

To derive a contradiction, recall that, by the definition of $a(q',\varphi)$ and $a(q^{n},\varphi)$,
    \[ u(\rho')=\varphi(q')+\frac{a(q',\varphi)}{\mu}(\rho'-p^*+\mu(\rho'-q'))\]
    and
    \[ u(\rho') \leq\varphi(q^{n})+\frac{a(q^{n},\varphi)}{\mu}(\rho'-p^*+\mu(\rho'-q^{n})).\]
    Combining these two equations with Eq.~\eqref{phipsi}
we obtain
    \[
    a(q',\varphi)(\rho'-p^*+\mu(\rho'-q^{n})) \leq a(q^{n},\varphi)(\rho'-p^*+\mu(\rho'-q^{n})).\]
Since $\rho' > p^*$ and $\rho' > q^{n}$ this implies that
$a(q',\varphi) \leq a(q^{n},\varphi)$,
    contradicting Eq.~\eqref{a'an}.
It follows that $p_{k+1}$ that is defined by Eq.~\eqref{pk+1=pk} satisfies $p_{k+1} > p_k$.
\end{proof}

The following lemma says that linear intervals are followed by nonlinear intervals and vice versa.
\begin{Lemma}
\label{lemma:alternate}
    \begin{enumerate}
        \item If $p^*<p_0<1$, then $\rho(p_0,w)=p_0$ and
         $(p_0,p_1]$ is a nonlinear interval.
         (If $p_0=p^*$,
          then $(p_0,p_1]$ may be a linear or a nonlinear interval.)
        \item For every $k\geq 1$ such that $p_k>1$,
if $(p_{k-1},p_k]$ is a linear interval (resp.~a nonlinear interval), then $(p_k,p_{k+1}]$ is a nonlinear interval (resp.~a linear interval).
    \end{enumerate}
\end{Lemma}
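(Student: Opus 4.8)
The plan is to prove both claims by analyzing the local behavior of the relevant functions near the transition points, using the second-derivative (or concavity) information supplied by Lemma~\ref{extrema}. The two parts of the lemma are really two sides of the same coin: crossing from a linear interval to the next interval, and crossing from a nonlinear interval to the next. I would treat them by examining the sign of the discrepancy between the nonrevealing slope $\varphi_k'(p)$ and the best splitting slope $a(p,\varphi_k)$ in a one-sided neighborhood of $p_k$.

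For the first claim, suppose $p^*<p_0<1$. By the initialization step, $(\cav u)(p_0)=u(p_0)$ and $p_0$ is the infimum of such points above $p^*$, so on $(p^*,p_0)$ the concavification strictly exceeds $u$, which pins down the value $w(p_0)=u(p_0)$ and forces $a(p_0,w)$ to be realized at $p_0$ itself, i.e. $\rho(p_0,w)=p_0$. I would verify this by noting that if instead $\rho(p_0,w)>p_0$, then by Lemma~\ref{extrema}(1)(i) one would have $u'(\rho)=\frac{1+\mu}{\mu}a(p_0,w)$ at an interior point $\rho>p_0$ where $u''(\rho)\le 0$, and combining this with the definition of $p_0$ as the left endpoint of the region where $\cav u = u$ would contradict the concavification structure just to the right of $p_0$. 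Hence the condition of Step~I.5 holds and $(p_0,p_1]$ is nonlinear. The parenthetical case $p_0=p^*$ is where this argument breaks down, since $\Delta(p)$ degenerates at $p^*$ and both behaviors are a priori possible.

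For the second claim, I would argue that the type of interval is governed by which of the two candidate slopes dominates just past $p_k$, and that at $p_k$ the two slopes agree (differentiability of $w$ at the transition points, which the algorithm is designed to guarantee). If $(p_{k-1},p_k]$ is linear, then $w$ was defined by $\psi_{k-1}$ up to $p_k=\rho(p_{k-1},w)$, and by Lemma~\ref{extrema}(1)(i)--(ii) applied at $\rho=p_k$ we have $u'(p_k)=\frac{1+\mu}{\mu}a(p_{k-1},w)$ with $u''(p_k)\le 0$. I would then compute $F'(p_k^+)$ for the function $F(p)=\mu\frac{u(p)-w(p_k)}{\Delta(p)}$ built at the new base point $p_k$ and show, using the concavity of $u$ at $p_k$ inherited from $u''(p_k)\le 0$, that the maximizer of $F$ collapses to $p_k$, i.e. $\rho(p_k,w)=p_k$, so $(p_k,p_{k+1}]$ is nonlinear. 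Conversely, if $(p_{k-1},p_k]$ is nonlinear, then $p_k$ was produced by Step~I.5 as $\inf\{p>p_{k-1}\colon \rho(p,\varphi_{k-1})>p\}$, so arbitrarily close to $p_k$ from the left the maximizer $\rho(p,\varphi_{k-1})$ jumps strictly above $p$; passing to the limit and using continuity of $p\mapsto a(p,\cdot)$ forces $\rho(p_k,w)>p_k$, so Step~I.4 applies and $(p_k,p_{k+1}]$ is linear.

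The main obstacle I anticipate is the converse direction in the second claim, namely establishing $\rho(p_k,w)>p_k$ strictly when the preceding interval is nonlinear. The definition of $p_k$ only guarantees approach by points $p$ with $\rho(p,\varphi_{k-1})>p$, and one must rule out that the supremum in $a(p_k,w)$ is attained only at $p_k$ in the limit; this is exactly the kind of degeneracy that the long contradiction argument in Lemma~\ref{lemmapk} was set up to handle. I expect to reuse that machinery — the auxiliary comparison function $\Psi$ with its quasi-explicit solution $\gamma(p)=c(p)(p-p^*)^{-\mu}$ and the sign analysis of $\widetilde u$ — to show that the optimal splitting target stays bounded away from $p_k$, thereby securing the strict inequality. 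The symmetric statements for the decreasing sequence $(\widetilde p_k)$ follow by the mirror argument across $p^*$.
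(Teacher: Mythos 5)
There are genuine gaps in all three pieces of your argument. For Claim 1, your assertion that the initialization ``pins down the value $w(p_0)=u(p_0)$'' is false when $p^*<p_0$: by Eq.~\eqref{split}, $w(p_0)$ is a nontrivial convex combination of $u(\widetilde p_0)$ and $u(p_0)$ with weight $\tfrac{p_0-p^*}{(p_0-\widetilde p_0)(1+\mu)}>0$ on $u(\widetilde p_0)$, so it differs from $u(p_0)$ in general (it equals $u(p_0)$ precisely in the degenerate cases $p_0=p^*$ or $u(\widetilde p_0)=u(p_0)$). Your fallback via Lemma~\ref{extrema}(1) is also not a proof: knowing that a hypothetical maximizer $\rho>p_0$ satisfies $u'(\rho)=\tfrac{1+\mu}{\mu}a(p_0,w)$ and $u''(\rho)\leq 0$ does not by itself contradict anything about $\cav u$. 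The paper instead uses the strict chord inequality $\frac{u(p)-u(p_0)}{p-p_0}<\frac{u(p_0)-u(\widetilde p_0)}{p_0-\widetilde p_0}$ for $p\in(p_0,1]$ (from the definition of $\widetilde p_0,p_0$) and combines it algebraically with Eq.~\eqref{split} to get $\frac{u(p)-w(p_0)}{p-p^*+\mu(p-p_0)}<\frac{u(p_0)-w(p_0)}{p_0-p^*}$, which directly gives $\rho(p_0,w)=p_0$.

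For the linear-to-nonlinear direction of Claim 2, your plan is intrinsically local: establishing $u''(p_k)\leq 0$ and analyzing $F'(p_k^+)$ can at best show that $p_k$ is a \emph{local} maximizer of the new function $F$ based at $p_k$, but $\rho(p_k,w)$ is defined by a supremum over all of $(p_k,1]$, and a local argument cannot exclude a distant maximizer. The paper's argument is global and purely algebraic: using $a(p_{k-1},w)=\mu\frac{u(p_k)-w(p_k)}{p_k-p^*}$ and the affine relation $w(p_k)=w(p_{k-1})+a(p_{k-1},w)(p_k-p_{k-1})$, it shows that any maximizer $\rho_k$ for base point $p_k$ would also attain the supremum defining $a(p_{k-1},w)$, hence $\rho_k\leq\rho(p_{k-1},w)=p_k$ by maximality. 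Finally, for the nonlinear-to-linear direction you correctly identify the difficulty but leave it unresolved, proposing to redo the $\Psi/\gamma$ machinery of Lemma~\ref{lemmapk}; the paper closes this in two lines by invoking the \emph{conclusion} of Lemma~\ref{lemmapk}: if $\rho(p_k,w)=p_k$, then since $\varphi_k$ coincides with $\varphi_{k-1}$ on $[p_k,1]$ and points with $\rho(p,\varphi_{k-1})>p$ accumulate at $p_k$ from above, Step~I.5 would yield $p_{k+1}=p_k$, contradicting the strict monotonicity $p_{k+1}>p_k$.
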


\begin{proof}
     By the definition of $\widetilde p_0$ and $p_0$, we have
    \[ \frac{u(p)-u(p_0)}{p-p_0}<\frac{u(p_0)-u(\widetilde p_0)}{p_0-\widetilde p_0}, \ \ \ \forall p\in(p_0,1].\]
    Simple (though tedious) algebraic manipulations combining this inequality with Eq.~\eqref{split} for $p=p_0$ yield
    \[ \frac{u(p)-w(p_0)}{p-p^*+\mu(p-p_0)}<\frac{u(p_0)-w(p_0)}{p_0-p^*}, \ \ \ \forall p\in(p_0,1].\]
    Claim 1 follows.

We turn to prove Claim 2.
Suppose that
$(p_{k-1},p_k]$ is a linear interval.
By construction we have
\begin{equation}
\label{equ:21.1}
a(p_{k-1},w)=\mu\cdot\frac{u(p_{k})-w(p_{k-1})}{p_{k}-p^*+\mu(p_{k}-p_{k-1})}.
\end{equation}
Since on the interval $(p_{k-1},p_k]$ the function $w$ is defined by Eqs.~\eqref{equ:w:i4} and~\eqref{eq psi}, we have
$w(p_{k-1})=w(p_k)-a(p_{k-1},w)(p_k-p_{k-1})$, and Eq.~\eqref{equ:21.1} becomes
\begin{equation}\label{eq step i4}
a(p_{k-1},w)=\mu\cdot\frac{u(p_{k})-w(p_{k})}{p_{k}-p^*}.
\end{equation}
To show that
$(p_k,p_{k+1}]$ is a nonlinear interval we will show that $\rho_k:=\rho(p_{k},w)=p_k$.
By Eq.~\eqref{eq step i4} and Remark~\ref{remark alg}.\ref{remark:4} we have
\[  a(p_{k-1},w)=\mu\cdot\frac{u(p_{k})-w(p_{k})}{p_{k}-p^*}\leq a(p_k,w) = \mu\cdot\frac{u(\rho_k)-w(p_k)}{\rho_k-p^*+\mu(\rho_k-p_k)}.\]
Using again the relation $w(p_k)=w(p_{k-1})+a(p_{k-1},w)(p_k-p_{k-1})$, this last inequality becomes
\[ a(p_{k-1},w)\leq \mu\cdot\frac{u(\rho_k)-w(p_{k-1})}{\rho_k-p^*+\mu(\rho_k-p_{k-1})}.\]
Since $\rho(p_{k-1},w)$ is the maximal $p'$ that satisfies
$a(p_{k-1},w) = \mu\cdot\frac{u(p')-w(p_{k-1})}{p'-p^*+\mu(p'-p_{k-1})}$,
this implies that $\rho(p_{k},w)=\rho(p_{k-1},w)=p_k$, which is what we wanted to prove.
For later use we note that in this case we have  $a(p_{k-1},w)=a(p_k,w)$.

Finally assume that
$(p_{k-1},p_k]$ is a nonlinear interval,
so that $p_k=\inf\{ p>p_{k-1},\rho(p,w)>p\}$.
To prove that $(p_k,p_{k+1}]$ is a linear interval we will show that $\rho(p_k,w)>p_k$.
Suppose to the contrary that $\rho(p_k,w)=p_k$.
Then, the algorithm dictates that $p_{k+1}=\inf\{ p>p_{k},\rho(p,\varphi_k)>p\}$ and $w=\varphi_k$ on $(p_k,p_{k+1}]$.
By the definition of $p_k$, this implies that $p_{k+1}=p_k$, contradicting Lemma~\ref{lemmapk}.
We conclude that $(p_k,p_{k+1}]$ is a linear interval.
\end{proof}

\begin{Lemma}
    \label{lemma:e2}
    The algorithm ends after a finite number of iterations: there exists
 $k\geq 0$ such that $p_k=1$ and there exists $\widetilde{k}\geq 1$ such that $\widetilde{p}_{\widetilde{k}}=0$.
\end{Lemma}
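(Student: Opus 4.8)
The plan is to show that the sequence $(p_k)$ cannot have infinitely many distinct terms strictly below $1$, and symmetrically for $(\widetilde p_k)$ above $0$. The key structural fact available to me is Lemma~\ref{lemma:alternate}: linear and nonlinear intervals strictly alternate, together with Lemma~\ref{lemmapk}, which guarantees $p_k < p_{k+1}$ at every step. Since the steps are strictly increasing and bounded above by $1$, the only way the algorithm could fail to terminate is if the $p_k$ accumulate at some limit $p_\infty \le 1$ without ever reaching $1$. So the heart of the proof is to rule out such an accumulation point.

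The main tool I would invoke is the semi-algebraicity of $u$, which is used repeatedly in Section~\ref{sec:51} (for instance in the proof of Lemma~\ref{lemmapk}). Because $u$ is semi-algebraic, the function $p \mapsto a(p,\varphi)$ and the transition structure built from it are tame: $u$ has only finitely many points where its second derivative changes sign on $[p^*,1]$, and on each cell of a suitable finite partition $u$ is either strictly convex, strictly concave, or affine. I would argue that on each such cell the algorithm can produce only boundedly many endpoints $p_k$. Concretely, within a maximal interval on which $u$ is strictly concave, Lemma~\ref{extrema} forces any splitting target $\rho(q,w)$ to satisfy $u'(\rho) = \tfrac{1+\mu}{\mu}\,a(q,w)$ and $u''(\rho)\le 0$, which pins down $\rho$ and prevents the nonlinear-interval endpoints from clustering; the alternation then prevents linear-interval endpoints from clustering between them. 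Thus an accumulation of the $p_k$ would have to occur at a boundary between cells, i.e.\ at one of the finitely many points where the geometry of $u$ changes.

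The hard part will be excluding accumulation precisely at such a boundary point $p_\infty$. Here I would combine the strict-increase Lemma~\ref{lemmapk} with a local contradiction argument in the spirit of its proof: if infinitely many $p_k \nearrow p_\infty$, then near $p_\infty$ the intervals alternate linear/nonlinear with vanishing length, and I would extract, just as in Lemma~\ref{lemmapk}, a comparison between the slope $a(\cdot,w)$ just below $p_\infty$ and the nonrevealing derivative $\varphi'$, using the continuity of $p\mapsto a(p,\varphi)$ and the relation $a(p,\varphi_k)=\mu\frac{u(p)-\varphi_k(p)}{p-p^*}=\varphi_k'(p)$ valid on nonlinear intervals (Eq.~\eqref{eq stpA5}). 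The two slopes would be forced to be simultaneously the larger one from each side, which is impossible unless the alternation stops, contradicting the assumption of infinitely many distinct $p_k$. This yields a finite termination on $[p^*,1]$.

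Finally, the decreasing part of the algorithm on $[0,p^*]$ is handled by an entirely symmetric argument: the roles of $a$ and $\rho$ are played by $\widetilde a$ and $\widetilde\rho$, the inequalities reverse direction because $p - p^* < 0$ there, and Lemmas~\ref{extrema}, \ref{lemmapk}, and~\ref{lemma:alternate} all have mirror-image analogues. Since each half of $[0,1]$ is partitioned by the algorithm into finitely many alternating linear and nonlinear intervals whose endpoints cannot accumulate, there exists $k\ge 0$ with $p_k = 1$ and $\widetilde k \ge 1$ with $\widetilde p_{\widetilde k}=0$, which is the assertion of the lemma.
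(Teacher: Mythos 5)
Your skeleton matches the paper's: reduce the problem, via the strict monotonicity of $(p_k)$ from Lemma~\ref{lemmapk}, to ruling out an accumulation point $p_\infty\le 1$, and invoke the semi-algebraicity of $u$ to control the behaviour of $u$ near $p_\infty$. But the step that actually produces the contradiction is missing. Your two central claims --- that each cell of a convexity/concavity partition of $u$ can contain only boundedly many of the $p_k$ because Lemma~\ref{extrema} ``pins down $\rho$'', and that at a cell boundary ``the two slopes would be forced to be simultaneously the larger one from each side'' --- are assertions rather than arguments: neither isolates a quantity that is forced to satisfy two incompatible constraints. (Note also that if the cell-by-cell bound were actually established, the total number of endpoints would already be finite and your ``hard part'' at cell boundaries would be vacuous.)

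The quantity the paper uses to close the argument is the sign of $u''$ on \emph{linear} intervals, of which there are infinitely many accumulating at $p_\infty$ by Lemma~\ref{lemma:alternate}. On a linear interval $(p_k,p_{k+1}]$ set $f(p)=\mu\bigl(u(p)-w(p_k)\bigr)-a(p_k,w)\bigl(p-p^*+\mu(p-p_k)\bigr)$; by the definition of $a(p_k,w)$ and the fact that $p_{k+1}=\rho(p_k,w)$ attains the supremum, $f$ vanishes at both endpoints and is $\le 0$ in between. Unless $u$ is affine there, $f$ is strictly negative at some interior point, and a function that is $\le 0$, vanishes at both endpoints and is not identically zero must have $f''=\mu u''>0$ somewhere inside; the affine case is excluded separately, since then $\rho(p_k,w)$ would reach $p_\infty$ instead of a point strictly below it. On the other hand, Lemma~\ref{extrema}(1.ii) forces $u''\le 0$ at the splitting target $p_{k+1}$ of every linear interval. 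Since $u''$ is semi-algebraic and therefore of constant sign on a left neighbourhood of $p_\infty$, these two conclusions are incompatible, and this is the contradiction. If you want to rescue your cell-based variant, the same computation is what you need: a linear interval cannot sit inside a region where $u''<0$, because there $f$ would be strictly concave with two zeros and hence positive in between, contradicting $f\le 0$. Without an argument of this type your proof does not close.
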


\begin{proof}
We will prove the first claim.
The second claim is proven analogously.
Assume by contradiction that $p_k < 1$ for every $k \in \N$, and set $p_\infty=\lim_{n \to \infty}p_n$.
By Lemma \ref{lemmapk}, $(p_k,p_{k+1})\neq\emptyset$ for every $k\in\N$.
        Since $u$ is semi-algebraic, there is $n_0$ sufficiently large such that $u$ is twice differentiable on $[p_{n_0},p_\infty)$.
        Let $k\geq n_0$ be such that the interval $(p_k,p_{k+1}]$ is linear.
By Eq.~\eqref{eq stpA5} and the definition of $a(p_k,w)$ (Eq.~\eqref{equ:91}),
        \begin{itemize}
            \item $a(p_k,w)=\mu\cdot\frac{u(p_k)-w(p_k)}{p_k-p^*}
            =\mu\cdot\frac{u(p_{k+1})-w(p_{k})}{p_{k+1}-p^*+\mu(p_{k+1}-p_k)}$,
            \item $a(p_k,w)\geq \mu\cdot\frac{u(p)-w(p_k)}{p-p^*+\mu(p-p_k)}$, for every $p\in(p_k,p_{k+1})$.
        \end{itemize}
        Equivalently, if we set
        \begin{equation}
        \label{equ:39.1}
         f(p)=\mu(u(p)-w(p_k))-a(p_k,w)-a(p_k,w)(p-p^*+\mu(p-p_k)),
         \end{equation}
        it holds that $f(p_k)=f(p_{k+1})=0$ and $f(p)\leq 0$ for every $p\in(p_k,p_{k+1})$.

By Lemma~\ref{lemma:alternate} there are infinitely many linear intervals.
We argue now that, provided $k$ is sufficiently large,
if the interval $(p_k,p_{k+1}]$ is linear then
 there exists $p \in (p_k,p_{k+1})$ with $f(p) < 0$.
Indeed, if this is not true, then for every such $k$ sufficiently large, $f(p) = 0$ for every $p \in (p_k,p_{k+1})$.
By Eq.~\eqref{equ:39.1} this implies that $u$ is affine on $(p_k,p_{k+1})$.
Since $u$ is semi-algebraic, it is affine on the whole interval $[p_{n_1},p_\infty)$, for some large enough $n_1$.
But in this case, for every $k\geq n_1$, if $\rho(p_k,w)>p_k$ then $\rho(p_k,w)=p_\infty$,
contradicting the fact that $p_k < p_\infty$ for every $k$.

We conclude that for every $k$ sufficiently large such that
the interval $(p_k,p_{k+1}]$ is linear
 there is $p \in (p_k,p_{k+1})$ satisfying $f(p) < 0$.
In that case we can also find $p'\in(p_k,p_{k+1})$ such that $f''(p')>0$.
Since $f'' = \mu u''$, this implies that $u''(p_k) > 0$.
Since $u$ is semi-algebraic, this implies that $u''(p) > 0$ for every $p$ sufficiently close to $p_\infty$.
However, by Lemma~\ref{extrema} (1.ii), $u''(q)\leq 0$ for some $q\in(p_k,p_{k+1})$, a contradiction.
\end{proof}

\subsection{The function $w$ is differentiable and concave}
\label{sec:52}

\begin{Lemma}
\label{lemma:e}
The function $w$ is differentiable on $[0,p^*)\cup (p^*,1]$. If $\widetilde p_0<p^*<p_0$,
 then
$w$ is differentiable everywhere.
\end{Lemma}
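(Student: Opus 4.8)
The plan is to exploit the explicit piecewise description of $w$ produced by the algorithm together with the alternation structure of Lemma~\ref{lemma:alternate}. On the interior of every linear interval $(p_k,p_{k+1})$ the function $w=\psi_k$ is affine (see Eq.~\eqref{eq psi}), hence differentiable; on the interior of every nonlinear interval $w=\varphi_k$ solves the ODE~\eqref{varphiA}, whose right-hand side $\mu(u(p)-\varphi_k(p))/(p-p^*)$ is continuous away from $p^*$, so $w$ is of class $C^1$ there as well. The same holds on the open initialization interval $(\widetilde p_0,p_0)$, where $w$ is affine by Eq.~\eqref{split}. Thus differentiability can fail only at the junction points $p_k$, $\widetilde p_k$, and possibly at $p^*$, and the whole argument reduces to matching one-sided derivatives at these points.

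For an interior junction $p_k$ with $k\geq 1$, Lemma~\ref{lemma:alternate} guarantees that exactly one of the two adjacent intervals is linear. If $(p_{k-1},p_k]$ is linear and $(p_k,p_{k+1}]$ is nonlinear, then $w'_-(p_k)=a(p_{k-1},w)$, the slope of $\psi_{k-1}$, while $w'_+(p_k)=\varphi_k'(p_k)=\mu(u(p_k)-w(p_k))/(p_k-p^*)$ since $\varphi_k(p_k)=w(p_k)$; these agree by Eq.~\eqref{eq step i4}. If instead $(p_{k-1},p_k]$ is nonlinear and $(p_k,p_{k+1}]$ is linear, then $w'_+(p_k)=a(p_k,w)$, the slope of $\psi_k$, whereas by Remark~\ref{remark alg}.\ref{remark:4} (Eq.~\eqref{eq stpA5}, applied to the left derivative at the right endpoint of a nonlinear interval) we get $w'_-(p_k)=(\varphi_{k-1})'_-(p_k)=a(p_k,\varphi_{k-1})=a(p_k,w)$; so again the two one-sided derivatives coincide. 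Hence $w$ is differentiable at every $p_k$ with $k\geq 1$, and symmetrically at every $\widetilde p_k$.

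It remains to treat the junction between the initialization interval and the iterative part, and the point $p^*$. Suppose $\widetilde p_0<p_0$. When $p_0<1$, Lemma~\ref{lemma:alternate}(1) shows that $(p_0,p_1]$ is nonlinear, so $w'_+(p_0)=\varphi_0'(p_0)=\mu(u(p_0)-w(p_0))/(p_0-p^*)$, while $w'_-(p_0)$ is the slope of the affine function~\eqref{split}. These agree precisely when $w(p_0)$ satisfies the consistency relation~\eqref{equ:21bb} with $(p',p'')=(\widetilde p_0,p_0)$; but~\eqref{split} is exactly the middle-split value~\eqref{eq: middle split2} for this pair, which was obtained from~\eqref{equ:21bb}, so the relation holds and $w$ is differentiable at $p_0$. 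The mirror argument, using the second jump equation of Section~\ref{strategy:2}, yields differentiability at $\widetilde p_0$. When $p_0=1$ (resp.\ $\widetilde p_0=0$) the corresponding iterative part is vacuous and $w$ is affine up to the endpoint, so no junction arises. Finally, if $\widetilde p_0<p^*<p_0$, then $p^*$ lies in the interior of the affine interval $[\widetilde p_0,p_0]$, so $w$ is differentiable at $p^*$ as well, which together with the above gives differentiability everywhere; in the remaining configurations $p^*$ is an endpoint of (or equal to) the initialization interval and is excluded from the claim, consistent with variation~b of Example~\ref{example3}.

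I expect the only step requiring genuine verification beyond invoking earlier identities to be the derivative matching at $p_0$ and $\widetilde p_0$: one must confirm that the affine middle-split slope coming from~\eqref{split} equals the no-revelation derivative of $\varphi_0$, i.e.\ that~\eqref{split} is consistent with~\eqref{equ:21bb}. This is the technical heart of the argument and amounts to a short but careful algebraic check, whereas the interior junctions follow immediately from Eqs.~\eqref{eq step i4} and~\eqref{eq stpA5}, which are already established.
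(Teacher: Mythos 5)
Your proof is correct and follows essentially the same route as the paper: reduce to matching one-sided derivatives at the junction points, use Lemma~\ref{lemma:alternate} to identify which adjacent interval is linear, and conclude via Eq.~\eqref{eq step i4} at the interior junctions and via the consistency of Eq.~\eqref{split} with Eq.~\eqref{equ:21bb} at $p_0$ and $\widetilde p_0$. The only cosmetic difference is that at a nonlinear-to-linear junction you invoke Remark~\ref{remark alg}.\ref{remark:4} (Eq.~\eqref{eq stpA5} extended to the left derivative at the right endpoint), whereas the paper re-runs the underlying infimum/continuity argument explicitly; the content is the same.
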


\begin{proof}
By its definition, the function $w$ is linear on $[\widetilde p_0,p_0]$.
Hence $w$ is differentiable on $(p^*,p_0)$,
and if $\widetilde p<p^*<p_0$ then $w$ is
differentiable at $p^*$.

We next note that $w$ is differentiable on each interval $(p_{k-1},p_{k})$.
Indeed, on each of these intervals, $w$ is affine or the
solution of a standard first order differential equation.

We now show that $w$ is differentiable at each of the points $(p_k)_{k \geq 1}$,
and, if $p^* < p_0$, it is also differentiable at $p_0$.
Denote by $w'_-(p)$ (resp.~$w'_+(p)$) the left (resp.~right) derivative of $w$ at $p$.

If $p^*<p_0$, then  $w$ is affine on $[p^*, p_0]$ and
by Eq.~\eqref{split}
we have
$w'_-(p_0)=\frac{\mu(u(p_0)-u(\widetilde{p}_0))}{(p_0-\widetilde{p}_0)(\mu+1)}$.
By definition,
$w'_+(p_0)=\varphi'_+(p_0)=\mu\cdot\frac{u(p_0)-w(p_0)}{p_0-p^*}$.
Substituting $w(p_0)$
by its expression in Eq.~\eqref{split},
 we deduce that
$w'_+(p_0)=w'_-(p_0)$.

For $k \geq 1$, either $\rho(p_k,w)>p_k$ or $\rho(p_k,w)=p_k$.
In the first case, $(p_{k},p_{k+1}]$ is a linear interval,
and then $w'_{-}(p_k)=\mu\cdot\frac{u(p_k)-w(p_k)}{p_k-p^*}$
and $w'_{+}(p_k)=\mu\cdot\frac{u(p_{k+1})-w(p_k)}{p_{k+1}-p^*+\mu(p_{k+1}-p_k)}$.
$p_k$ is defined to be $\inf\left\{p>p_{k-1}: \rho(p,\varphi_{k-1})>p\right\}$.
Thus, for all $p\in(p_{k-1},p_k)$ we have

$$\frac{u(p_{k+1})-w(p)}{p_{k+1}-p^*+\mu(p_{k+1}-p)}\leq \frac{u(p)-w(p)}{p-p^*}.$$

Since $p_k$ is the infimum of a decreasing sequence where the last inequality is reversed, and by the continuity of $w$ and $u$, we get the equality of the derivatives.

In the second case
the interval $(p_{k-1},p_k]$ is linear.
For such $k$ we have  $w'_{-}(p_k)=a(p_{k-1},w)$ and $w'_{+}(p_k)=\mu\cdot\frac{u(p_k)-w(p_k)}{p_k-p^*}$,
and the equality of the derivatives follows from Eq.~\eqref{eq step i4}.

 Analogous
arguments hold for
 the interval
$[0,p^*)$.
\end{proof}

\begin{Lemma}
\label{lemma:concave}
 The function $w$ is concave.
\end{Lemma}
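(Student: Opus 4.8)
The plan is to prove concavity by establishing that the derivative $w'$ is non-increasing on each of the two half-intervals $[0,p^*]$ and $[p^*,1]$, and then checking that $w$ does not convexify across the junction at $p^*$, i.e.\ that $w'_-(p^*)\ge w'_+(p^*)$. Since $w$ is continuous (Lemma~\ref{lemma:e}) and, by the standard gluing criterion, a continuous function that is concave on $[0,p^*]$ and on $[p^*,1]$ and satisfies $w'_-(p^*)\ge w'_+(p^*)$ is concave on all of $[0,1]$, this suffices. I will treat $[p^*,1]$; the interval $[0,p^*]$ is handled by the mirror-image argument using $\widetilde a$, $\widetilde\rho$ and the symmetric version of Lemma~\ref{extrema}.

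For the monotonicity of $w'$ on $[p^*,1]$ I would argue interval by interval. On a linear interval $w$ is affine, so $w'$ is constant there, and on the initialization piece $[p^*,p_0]$ (when $p^*<p_0$) $w$ is affine as well. On a nonlinear interval $(p_k,p_{k+1})$ we have $w=\varphi_k$ and, by Remark~\ref{remark alg} together with the definition $p_{k+1}=\inf\{p>p_k:\rho(p,\varphi_k)>p\}$, the equality $\rho(p,w)=p$ holds at every interior point $p$. Fixing such a $p$ at which $u$ is twice differentiable (all but finitely many, since $u$ is semi-algebraic) and applying Lemma~\ref{extrema}(2) with $q=p$ gives $w''(p)=\varphi_k''(p)\le 0$; since $w'$ is continuous on the nonlinear interval (the ODE right-hand side is continuous), $w'$ is non-increasing there. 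Finally, Lemma~\ref{lemma:e} shows $w'$ is continuous at each junction point $p_k$ and at $p_0$; gluing continuous, piecewise non-increasing pieces yields that $w'$ is non-increasing on all of $(p^*,1]$, hence $w$ is concave on $[p^*,1]$.

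It remains to control the junction at $p^*$, which I expect to be the main obstacle, because $w'$ may genuinely jump down there, as variation~b of Example~\ref{example3} shows. The key observation is that in every configuration in which $p^*$ is an endpoint of the initialization interval one has $w(p^*)=u(p^*)$: this is set directly when $\widetilde p_0=p^*=p_0$, and when $\widetilde p_0<p^*=p_0$ (or symmetrically) it follows by substituting $p=p^*$ into Eq.~\eqref{split}, as in Remark~\ref{remark stationary}. Moreover $(\cav u)(p^*)=u(p^*)$ by continuity of $\cav u$ and the definitions of $p_0,\widetilde p_0$, so $\cav u$ admits a supporting line of some slope $m$ at $p^*$ with $u(p)\le u(p^*)+m(p-p^*)$ for all $p$. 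I would then show, in each possible structure just to the right of $p^*$ (an affine initialization piece, a linear interval with slope $a(p^*,w)$, or a nonlinear interval governed by the ODE), that $w'_+(p^*)\le \tfrac{\mu}{1+\mu}m$: in the linear case $a(p^*,w)=\tfrac{\mu}{1+\mu}\sup_{p'>p^*}\tfrac{u(p')-u(p^*)}{p'-p^*}\le\tfrac{\mu}{1+\mu}m$; in the affine-initialization case $w'_+(p^*)$ is $\tfrac{\mu}{1+\mu}$ times a chord slope of $u$ over a point to the right, again $\le\tfrac{\mu}{1+\mu}m$; and in the nonlinear case, letting $p\downarrow p^*$ in $w'(p)=\mu(u(p)-w(p))/(p-p^*)$ and using $w(p^*)=u(p^*)$ gives $w'_+(p^*)=\tfrac{\mu}{1+\mu}u'_+(p^*)\le\tfrac{\mu}{1+\mu}m$. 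The mirror estimate on the left gives $w'_-(p^*)\ge\tfrac{\mu}{1+\mu}m$, and combining the two yields $w'_-(p^*)\ge w'_+(p^*)$. In the remaining configuration $\widetilde p_0<p^*<p_0$, $w$ is affine on a neighborhood of $p^*$ and hence differentiable there, so the junction condition is automatic.

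The delicate points are confined to this junction analysis: verifying $w(p^*)=u(p^*)$ and the supporting-line inequality in each sub-configuration, and handling the singular ODE $w'(p)=\mu(u(p)-w(p))/(p-p^*)$ as $p\to p^*$ (in particular checking that the one-sided limit of $w'$ exists and is finite, which follows from the concavity on $(p^*,1]$ already established). The bulk monotonicity argument, by contrast, is routine once Lemma~\ref{extrema}(2) and Lemma~\ref{lemma:e} are in hand.
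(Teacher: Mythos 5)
Your proposal is correct and follows essentially the same route as the paper: affine on the initialization piece and on linear intervals, $w''\le 0$ on nonlinear intervals via Lemma~\ref{extrema}(2) and semi-algebraicity of $u$, gluing through the differentiability established in Lemma~\ref{lemma:e}, and a separate comparison of $w'_-(p^*)$ with $w'_+(p^*)$ when $p^*$ is an endpoint of the initialization interval. Your supporting-line formulation of the junction step (bounding both one-sided derivatives by $\tfrac{\mu}{1+\mu}m$ for a slope $m$ of a supporting line of $\cav u$ at $p^*$) is only a repackaging of the paper's direct comparison of the sup and inf of chord slopes of $u$ at $p^*$ (its inequality~\eqref{ineqp0}), so no substantive difference.
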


\begin{proof}

    On the interval $[\widetilde{p}_0,p_0]$ and on linear intervals the function $w$ is affine.
    We turn to prove that $w$ is concave on the nonlinear intervals.
 We will only discuss nonlinear intervals defined by Step~I.5.

         On nonlinear intervals
        the function $w$ coincides with the solution $\varphi$ of Eq.~\eqref{varphiA}.
        Moreover, $\rho(q,\varphi)=q$ for every $q$ in such an interval.
The function $u$ is semi-algebraic, hence twice differentiable on [0,1], except possibly at finitely many points.
If $q$ is in a nonlinear interval and $u$ is twice differentiable at $q$,
then $u$ is twice differentiable in an open neighborhood of $q$,
and hence, by Lemma \ref{extrema}(2), we have $w''(q)=\varphi''(q)\leq 0$.

It follows that the interval $[0,1]$ can be partitioned into finitely many subintervals such that
$w'$ is weakly decreasing on the interior of each of the subintervals.
If $w$ is differentiable on $[0,1]$, we can conclude from this that $w'$ is decreasing everywhere, i.e., $w$ is convex on the whole interval $[0,1]$.
By Lemma \ref{lemma:e}, this is the case when $\widetilde p_0<p^*<p_0$.
If $p^* = 0$ (resp.~$p^*=1$) then $w$ is concave on $(0,1]$ (resp.~$[0,1)$), and hence also on $[0,1]$.

Suppose then that $p^* \in (0,1)$, and $p^*=p_0$ or $p^*=\widetilde p_0$.
In this case we have to examine the behavior of $w$ at $p^*$, where it may not be differentiable.
We will handle the case $p^* = p_0$.
The case $p^* = \widetilde p_0$ is solved analogously.

Since $w$ is differentiable on $[0,p^*) \cup (p^*,1]$,
both the left and the right derivatives at $p^*$ exist, and it is sufficient to show that $w'_+(p^*)\leq w'_-(p^*)$.
We will show that $w'_+(p^*) = a(p^*,w)$.
Indeed, when $p^*=p_0$ we have  $w(p^*)=u(p^*)$, and therefore
        \[ a(p^*,w)=\frac{\mu}{1+\mu}\sup_{p\in(p^*,1]}\frac{u(p)-u(p^*)}{p-p^*}. \]
We distinguish between two cases.
\begin{itemize}
\item  If $\rho(p^*,w)>p^*$, then
the interval $[p^*,p_1]$ is linear,
 that is, $w=\psi_0$, and we have
        \[ w'_+(p^*)=\psi_{0+}'(p^*)=a(p^*,w).\]
\item If $\rho(p^*,w)=p^*$, then
the interval $[p^*,p_1]$ is nonlinear. In this case, we have
\begin{eqnarray}
\begin{array}{rl}
\nonumber
 w'_+(p^*)=\varphi_{0+}'(p^*)=&\lim_{p\searrow p^*}\varphi_0'(p)
 =\lim_{p\searrow p^*}\mu\cdot\frac{u(p)-\varphi_0(p)}{p-p^*}\\
 =&\mu\lim_{p\searrow p^*}\frac{u(p)-u(p^*)}{p-p^*}-\mu\lim_{p\searrow p^*}\frac{\varphi_0(p)-\varphi_0(p^*)}{p-p^*}\\
 \label{equ:8:1}
 =&\mu u'_+(p^*) - \mu \varphi_{0+}'(p^*)
 =\mu u'_+(p^*) - \mu w'_+(p^*).
 \nonumber
 \end{array}
 \end{eqnarray}
 It follows from Eq.~\eqref{equ:8:1} that
 \[ w'_+(p^*)=\frac{\mu}{1+\mu}u'_+(p^*)=\frac{\mu}{1+\mu}\sup_{p\in(p^*,1]}\frac{u(p)-u(p^*)}{p-p^*} = a(p^*,w).\]
 \end{itemize}
 We now calculate $w'_-(p^*)$.
If $\widetilde p_0=p_0=p^*$, a similar argument shows that $w'_-(p^*)=\widetilde a(p^*,w) = \frac{\mu}{1+\mu}\inf_{p\in[0,p^*)}\frac{u(p)-u(p^*)}{p-p^*} $.
However, in this case, at $p_0$ the function $u$ is equal to its convex hull, and therefore
\[ w'_+(p^*) = \frac{\mu}{1+\mu}\sup_{p\in(p^*,1]}\frac{u(p)-u(p^*)}{p-p^*}\leq
\frac{\mu}{1+\mu}\inf_{p'\in[0,p^*)}\frac{u(p')-u(p^*)}{p'-p^*} = w'_-(p^*),\]
as desired.
If $\widetilde p_0<p_0=p^*$, Eq.~\eqref{split} yields
\[ w'_-(p^*)=\frac{\mu}{1+\mu}\frac{u(p_0)-u(\widetilde p_0)}{p_0-\widetilde p_0}.\]
From the definition of $\widetilde p_0$ and $p_0$ we deduce that
        \begin{equation}
        \label{ineqp0}
\frac{u(p_0)-u(\widetilde p_0)}{p_0-\widetilde p_0}\geq \sup_{p\in(p_0,1]}\frac{u(p)-u( p_0)}{p-p_0},
        \end{equation}
and once again it follows that $w'_+(p^*)\leq w'_-(p^*)$.
\end{proof}

\subsection{The functions $w$ and $v$ coincide. }
\label{sec:53}

\begin{Proposition}
\label{lemma:f}
 For every $p\in[0,1]$ we have $w(p)=v(p)$.
\end{Proposition}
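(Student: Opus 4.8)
The plan is to invoke the uniqueness part of Theorem~\ref{Char}: since $v$ is characterized as the unique continuous, concave function on $[0,1]$ that is differentiable off $p^*$ and satisfies G.1--G.3, it suffices to verify that the function $w$ produced by the algorithm enjoys all of these properties, and then conclude $w=v$. The regularity is already in hand: $w$ is concave by Lemma~\ref{lemma:concave} (hence continuous on $[0,1]$) and differentiable on $[0,1]\setminus\{p^*\}$ by Lemma~\ref{lemma:e}. So the proposition reduces to checking G.1, G.2 and G.3, which I would organize according to the interval structure produced by the algorithm, treating $p>p^*$ throughout and handling $p<p^*$ symmetrically via $\widetilde a$, $\widetilde\rho$ and the decreasing part of the algorithm.

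For G.2 I would argue interval by interval. On a nonlinear interval $w=\varphi_k$ satisfies the defining ODE of Eq.~\eqref{varphiA}, so $w'(p)(p-p^*)+\mu(w(p)-u(p))=0$ and G.2 holds with equality. On a linear interval $(p_k,p_{k+1}]$ the function $w$ is affine with slope $a(p_k,w)$; writing $w(p)=w(p_k)+(p-p_k)a(p_k,w)$ and using that $p-p^*+\mu(p-p_k)>0$ for $p>p^*$, the inequality $a(p_k,w)\ge \mu\frac{u(p)-w(p_k)}{p-p^*+\mu(p-p_k)}$, which is immediate from the supremum defining $a$ in Eq.~\eqref{equ:91}, rearranges exactly into $w'(p)(p-p^*)+\mu(w(p)-u(p))\ge 0$. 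On the initial interval $[\widetilde p_0,p_0]$ I would appeal to the second item of Remark~\ref{remark alg}, which identifies $w$ there with $v$ through Lemma~2 of \cite{CRRV}, so G.2 is inherited. For G.1 I evaluate $w(p^*)$: if $\widetilde p_0=p_0=p^*$ then $w(p^*)=u(p^*)$ by initialization, and otherwise Eq.~\eqref{split} together with Remark~\ref{remark stationary} shows $w(p^*)$ is the value at $p^*$ of the chord joining $(\widetilde p_0,u(\widetilde p_0))$ and $(p_0,u(p_0))$, which equals $(\cav u)(p^*)\ge u(p^*)$. When $p^*$ lies strictly inside $[\widetilde p_0,p_0]$ it is interior to an affine piece, hence not an extreme point of the hypograph, so the equality clause is vacuous; when $p^*$ equals $\widetilde p_0$ or $p_0$ it is a contact point of $\cav u$ with $u$ and equality holds anyway.

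The delicate part is G.3, which must hold exactly at the extreme points of the hypograph of $w$, so I would first identify these. Interior points of affine (linear and initial) intervals are not extreme; the candidates are points of the strictly concave nonlinear intervals, the junction points $p_k$ (and $p_0,\widetilde p_0$), and the two endpoints $0$ and $1$. On nonlinear intervals $w=\varphi_k$ satisfies the ODE, so the G.3 equation holds at every point there. At a junction $p_k$, Lemma~\ref{lemma:alternate} tells me that one adjacent interval is nonlinear and Lemma~\ref{lemma:e} guarantees $w$ is differentiable at $p_k$; taking the one-sided derivative from the nonlinear side, which obeys the ODE (cf. Eq.~\eqref{eq stpA5}, and Eq.~\eqref{eq step i4} for a junction that is the right endpoint of a linear interval), yields $w'(p_k)(p_k-p^*)+\mu(w(p_k)-u(p_k))=0$. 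The one genuinely separate computation is at $p=1$: if $1$ ends a nonlinear interval the ODE again applies, while if $1=\rho(p_k,w)$ ends a linear interval I substitute $w(1)=w(p_k)+(1-p_k)a(p_k,w)$ together with the identity $a(p_k,w)\,(1-p^*+\mu(1-p_k))=\mu(u(1)-w(p_k))$ that follows from $\rho(p_k,w)=1$, whereupon the G.3 expression telescopes to $0$; the endpoint $p=0$ is symmetric.

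I expect the main obstacle to be precisely this bookkeeping of the extreme points, combined with the need, at every kink, to invoke the differentiability of Lemma~\ref{lemma:e} so that the single value $w'(p_k)$ can be identified with the ODE slope. Without differentiability, matching only one of the one-sided derivatives to the nonlinear side would not, by itself, force the G.3 equality at the kink, so the interplay between Lemmas~\ref{lemma:alternate} and~\ref{lemma:e} is what makes the verification go through; once all of G.1--G.3 are checked, uniqueness in Theorem~\ref{Char} gives $w=v$ on all of $[0,1]$.
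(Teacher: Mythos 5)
Your proposal is correct and follows essentially the same route as the paper: verify that $w$ satisfies conditions G.1--G.3 of Theorem~\ref{Char} (using Lemmas~\ref{lemma:e}, \ref{lemma:concave}, \ref{lemma:alternate} and Eq.~\eqref{eq stpA5} for the nonlinear intervals, and the supremum defining $a(p_k,w)$ for the linear ones) and then invoke uniqueness. The only divergences are minor: for G.2 on $[\widetilde p_0,p_0]$ you cite Remark~\ref{remark alg}(2) where the paper gives a direct argument via $\cav u$, and your explicit telescoping check of G.3 at the corner point $p=1$ when the final interval is linear is actually slightly more careful than the paper's blanket claim that all extreme points lie in nonlinear intervals.
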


\begin{proof}
To prove the claim we show that the function $w$ satisfies the conditions of Theorem~\ref{Char}.
Condition G.1 holds by the definition of $w$ on the interval $[\widetilde p_0,p_0]$.
By Lemmas~\ref{lemma:e} and~\ref{lemma:concave},
$w$ is concave and differentiable on $[0,1]\setminus\{ p^*\}$.

Since $w$ is affine
 on the interval $[\widetilde{p}_0,p_0]$ and on  linear intervals $[p_k,p_{k+1}]$,
and since by Lemma~\ref{lemma:alternate} the two end-points of these intervals lie in
nonlinear intervals,
it follows that
all the extreme points of the hypograph of $w$ lie in nonlinear intervals $[p_k,p_{k+1}]$.
On these intervals, from Eq.~\eqref{eq stpA5}, the relation $w'(p)(p-p^*) + \mu\left(w(p)-u(p)\right)=0$ holds,
 and therefore Condition G.3 holds.
Moreover, Condition G.2 holds on  nonlinear intervals $[p_k,p_{k+1}]$.

It remains to show that
Condition G.2 holds:
$w'(p)(p-p^*) + \mu\left(w(p)-u(p)\right)\geq 0$
 on the interval $[\widetilde{p}_0,p_0]$ and on linear intervals.
On a linear interval $(p_k,p_{k+1}]$ we have $w(p)=w(p_{k})+(p-p_{k})a(p_{k},w)$ and $w'(p)=a(p_{k},w)$.
It then follows by the definition of $a(p_k,w)$ that
 on these intervals
\[\begin{array}{rl}
(p-p^*)w'(p)+\mu(w(p)-u(p))
=&(p-p^*)a(p_{k},w)+\mu\bigl(w(p_{k})+(p-p_{k})a(p_{k},w)-u(p)\bigr)\\
=&\bigl(p-p^*+\mu(p-p_k)\bigr)a(p_{k},w)+\mu \bigl(w(p_{k})-u(p)\bigr)\\
 \geq &0,
    \end{array}\]
as desired.

On the interval
$[\widetilde{p}_0,p_0]$ the function $w$ is affine, thus $w'$ is constant and
therefore the function
$w(p)+ \frac{w'(p)(p-p^*)}{\mu}$ is affine as well.
The points $(\widetilde{p}_0,u(\widetilde{p}_0))$ and $(p_0,u(p_0))$ are on
the graph of this last function.
These points and the
interval
 connecting them are on
the graph of the function
$\cav u$.
It follows that for every
$p\in (\widetilde{p}_0, p_0)$
we have
 $u(p)\leq w(p)+ \frac{w'(p)(p-p^*)}{\mu}$, which implies
 that
 $w'(p)(p-p^*) + \mu\left(w(p)-u(p)\right)\geq 0$.

\end{proof}

\end{document}